\newcounter{lemma}[section]
\newcounter{corollary}[section]
\newcounter{remark}[section]
\newcounter{theorem}[section]
\newcounter{proposition}[section]
\newcounter{example}
\numberwithin{equation}{section}
\begin{document}

\markboth{E.~SEVOST'YANOV}{\centerline{On boundary H\"{o}lder
continuity ...}}

\def\cc{\setcounter{equation}{0}
\setcounter{figure}{0}\setcounter{table}{0}}

\overfullrule=0pt


\author{EVGENY SEVOST'YANOV}

\title{
{\bf ON BOUNDARY H\"{O}LDER CONTINUITY OF MAPPINGS WITH THE POLETSKY
CONDITION}}

\date{\today}
\maketitle

\begin{abstract}
The article is devoted to the study of mappings that distort the
modulus of families of paths by the Poletsky inequality type. At
boundary points of a domain, we have obtained the H\"{o}lder
inequality for such mappings, provided that their characteristic has
finite integral averages over infinitesimal balls. In the
manuscript, we have separately considered the cases of
homeomorphisms and mappings with branching. Also, we have separately
considered good boundaries and domains with prime ends.
\end{abstract}

\bigskip
{\bf 2010 Mathematics Subject Classification: Primary 30C65;
Secondary 31A15, 31B25}

\section{Introduction}

According to the classical theory of quasiconformal mappings, any
quasiconformal (quasiregular) mapping is H\"{o}lder continuous
inside a domain with some exponent. In particular, the following
result holds (see, e.g., \cite[Theorem~3.2]{MRV$_1$}).

\medskip
{\bf Theorem (Martio, Rickman, V\"{a}is\"{a}l\"{a}, 1970)}. {\sl
Suppose that $f$ is bounded and quasiregular in a domain $G\subset
{\Bbb R}^n$ and $F$ is a compact subset of $G.$ Let $K_I(f)$ be the
smallest constant $K$ for which the inequality $J(x, f)\leqslant
K\cdot (l(f^{\,\prime}(x)))^n$ holds for almost all $x\in G,$ $J(x,
f)=\det f^{\,\prime}(x)$ and
$l(f^{\,\prime}(x))=\min\limits_{|h|=1}|f^{\,\prime}(x)h|.$ Then
there is some constant $\lambda_n$ depending only on $n$ such that
the relation
$$|f(x)-f(y)|\leqslant C|x-y|^{\alpha}$$
holds for $x\in F,$ $y\in G,$ where
$\alpha=(K_I(f))^{\frac{1}{1-n}}$ and $C=\lambda_n(d(f, \partial
G))^{\,-\alpha}d(fG).$}

The main goal of this article is to establish a similar result for
more general classes of mappings on the boundary of a domain. At the
moment, not so much has been done in this direction. We should,
first of all, mention the research on the connection between
H\"{o}lder continuity on the boundary and within the domain,
see~\cite{AMN}, \cite{MN}, and \cite{NP$_1$}--\cite{NP$_2$}.
Separately, we will point to the publication~\cite{NP$_1$}, where
the results on global H\"{o}lder continuity of quasiconformal
mappings were obtained. Here, as a rule, either a definition or a
mapped domain is assumed to be the unit ball. Some progress on
H\"{o}lder continuity at boundary points was also made in relatively
recent publications, see~\cite{AM}, \cite{MSS} and~\cite{RSS}. At
the same time, as far as we know, the most general case, when the
mapping has an unbounded characteristic and acts between two domains
of a more general nature, has not been studied before.

Below we consider four cases, in each of which we prove the
H\"{o}lder continuity for some class of maps. These cases are as
follows: 1) homeomorphisms whose definition domain is good; 2)
mappings with branching, the definition domain of which is good; 3)
homeomorphisms whose definition domain has a bad boundary; 4)
mappings with branching, the definition domain of which has a bad
boundary. All four cases use the same analytical condition, namely,
we assume that the characteristic of mappings has finite averages
over infinitesimal balls.  Such conditions have already used in some
of our papers, see, e.g., \cite[section~7.5]{MRSY}, \cite{MSS} and
\cite{RSS}. As for the proofs of the main results related to each of
the cases, they are all similar in their ''analytical part'',
however, they are somewhat different both from the point of view of
the presence/absence of branching, and different geometry of the
domains under consideration.

\medskip
In what follows,
$$
B(x_0, r)=\{x\in {\Bbb R}^n: |x-x_0|<r\}\,,\qquad {\Bbb B}^n=B(0,
1)\,,$$
$$S(x_0,r) = \{ x\,\in\,{\Bbb R}^n : |x-x_0|=r\}\,, {\Bbb S}^{n-1}=S(0,
1)\,,$$
$$\Omega_n=m({\Bbb B}^n)\,, \omega_{n-1}={\mathcal H}^{n-1}({\Bbb S}^{n-1})\,,$$
$m$ is a Lebesgue measure in ${\Bbb R}^n,$ ${\mathcal H}^{n-1}$ is a
$(n-1)$-measured Hausdorff measure,
\begin{equation}\label{eq49***}
A(x_0, r_1,r_2): =\left\{ x\,\in\,{\Bbb R}^n:
r_1<|x-x_0|<r_2\right\}\,,
\end{equation}
and $M(\Gamma)$ denotes the {\it conformal modulus of the family of
paths $\Gamma$} (see~\cite{Va}). Let $Q:{\Bbb R}^n\rightarrow [0,
\infty]$ be a Lebesgue-measurable function equal to zero outside
$D.$ Consider the following concept, see~\cite[section~7.6]{MRSY}.
We say that a mapping $f:D\rightarrow \overline{{\Bbb R}^n}$ is a
{\it ring $Q$-mapping at a point $x_0\in \overline{D},$} $x_0\ne
\infty,$ if the condition
\begin{equation}\label{eq3*!!}
 M(f(\Gamma(S(x_0, r_1),\,S(x_0, r_2),\,D)))\leqslant
\int\limits_{A} Q(x)\cdot \eta^n(|x-x_0|)\ dm(x)\,, \end{equation}
is fulfilled for some $r_0=r(x_0)>0$ and arbitrary $0<r_1<r_2<r_0,$
where $\eta:(r_1,r_2)\rightarrow [0,\infty ]$ is a arbitrary
nonnegative Lebesgue measurable function satisfying the inequality
\begin{equation}\label{eq28*}
\int\limits_{r_1}^{r_2}\eta(r)\ dr\ \geqslant\ 1\,.
\end{equation}
A domain $G$ in ${\Bbb R}^n$ is called a {\it quasiextremal distance
domain} (short. $QED$-{\it domain}), if there is a number
$A_0\geqslant 1,$ such that the inequality
\begin{equation}\label{eq4***}
M(\Gamma(E, F, {\Bbb R}^n))\leqslant A_0\cdot M(\Gamma(E, F, G))
\end{equation}
holds for any continua $E, F\subset G.$

Given sets $A, B\subset{\Bbb R}^n$, we put
$${\rm diam}\,A=\sup\limits_{x, y\in A}|x-y|\,,\quad {\rm dist}\,(A, B)=
\inf\limits_{x\in A, y\in B}|x-y|\,.$$
Sometimes instead of ${\rm diam}\,A$ and ${\rm dist}\,(A, B)$ we
also write $d(A)$ and $d(A, B),$ respectively.

\medskip
Given numbers $A_0>0,$ $R_0>0$ and $\delta>0,$ a domain $D\subset
{\Bbb R}^n,$ $n\geqslant 2,$ a point $x_0\in \partial D,$
$x_0\ne\infty,$ a path connected continuum $A\subset D$ and a
function $Q:D\rightarrow[0, \infty]$ denoted by $\frak{F}^{A_0,
R_0}_{Q, A, \delta}(D, x_0)$ a family of all homeomorphisms
$f:D\rightarrow B(0, R_0)$ satisfying the
relations~(\ref{eq3*!!})--(\ref{eq28*}) at $x_0$ such that ${\rm
diam\,}(f(A))\geqslant\delta$ and the domain $D_f^{\,\prime}=f(D)$
satisfies the condition~(\ref{eq4***}) with $G\mapsto
D_f^{\,\prime}.$ The following statement holds.

\medskip
\begin{theorem}\label{th3} {\sl\,
Assume that, the following conditions hold: 1) there is
$r^{\,\prime}_0=r^{\,\prime}_0(x_0)>0$ such that, the set $B(x_0,
r)\cap D$ is connected for any $0<r<r^{\,\prime}_0;$ 2) there is
$0<C=C(x_0)<\infty,$ such that
\begin{equation}\label{eq1D}
\limsup\limits_{\varepsilon\rightarrow
0}\frac{1}{\Omega_n\cdot\varepsilon^n}\int\limits_{B(x_0,
\varepsilon)\cap D}Q(x)\,dm(x)\leqslant C\,.
\end{equation}
Then there is
$\widetilde{\varepsilon_0}=\widetilde{\varepsilon_0(x_0)}>0$ and a
number $\alpha=\alpha(n, \delta, C, R_0, x_0)>0$ such that the
relation
\begin{equation}\label{eq2.4.3}
|f(x)-f(y)|\leqslant \alpha\cdot\max\{|x-x_0|^{\beta_n},
|y-x_0|^{\beta_n}\}\end{equation}
holds for any $x, y\in B(x_0, \widetilde{\varepsilon(x_0)})\cap D$
and any $f\in\frak{F}^{A_0, R_0}_{Q, A, \delta}(D, x_0),$ where
$\beta_n=\left(\frac{n\log 2}{A_0C2^{n+1}}\right)^{\frac{1}{n-1}}.$
  }
\end{theorem}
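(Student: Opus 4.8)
The plan is to estimate the oscillation of $f$ on balls $B(x_0,r)\cap D$ by a modulus argument, then iterate. Fix $f\in\frak{F}^{A_0,R_0}_{Q,A,\delta}(D,x_0)$ and $0<r<r^{\,\prime}_0$. First I would pick two points $x,y\in B(x_0,r)\cap D$ realizing (up to a factor) the diameter of $f(B(x_0,r)\cap D)$ and join them by a continuum inside $B(x_0,r)\cap D$ (using condition 1), obtaining a continuum $E=f([x,y])\subset D_f^{\,\prime}$ with $\mathrm{diam}\,E$ comparable to $|f(x)-f(y)|$. On the other side, I would use the hypothesis $\mathrm{diam}\,f(A)\geqslant\delta$ together with the fact that $A$ is a fixed path-connected continuum at positive distance from $x_0$: for $r$ small enough $A\subset D\setminus B(x_0,r)$, so $F=f(A)$ is a continuum in $D_f^{\,\prime}$ with $\mathrm{diam}\,F\geqslant\delta$. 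The families $\Gamma(E,F,D_f^{\,\prime})$ and the image $f(\Gamma(S(x_0,r_1),S(x_0,r_2),D))$ of the ring family (with $r_1=r$, $r_2=r^{\,\prime}_0$) are related in the standard way, so combining the $QED$-condition \eqref{eq4***} with the ring $Q$-inequality \eqref{eq3*!!} gives a lower bound for $M(\Gamma(E,F,{\Bbb R}^n))$ in terms of $\log(r^{\,\prime}_0/r)$ and of $\int_{A(x_0,r,r^{\,\prime}_0)}Q\,\eta^n\,dm$ for an optimal $\eta$.

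The key analytic step is to bound $\int_{A(x_0,r,r^{\,\prime}_0)}Q(x)\,\eta^n(|x-x_0|)\,dm(x)$ from above. Here I would use condition 2: \eqref{eq1D} says the integral of $Q$ over $B(x_0,\varepsilon)\cap D$ is at most (roughly) $C\Omega_n\varepsilon^n$ for all small $\varepsilon$. A convenient choice is $\eta(t)=\dfrac{1}{t\log(r^{\,\prime}_0/r)}$ on $(r,r^{\,\prime}_0)$, which satisfies \eqref{eq28*} with equality. Writing the integral in spherical shells and using the mean-value bound on each shell, one gets
$$
\int\limits_{A(x_0,r,r^{\,\prime}_0)}Q(x)\,\eta^n(|x-x_0|)\,dm(x)\ \leqslant\ \frac{C'}{\bigl(\log(r^{\,\prime}_0/r)\bigr)^{n-1}}
$$
for a constant $C'$ proportional to $C$ (after possibly shrinking $r^{\,\prime}_0$ so that the $\limsup$ in \eqref{eq1D} is genuinely controlled, and absorbing $\Omega_n$, $\omega_{n-1}$, $n$). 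This is the step I expect to be the main obstacle, because one has to handle the non-integrability of $Q$ carefully — the averages are finite only in the limit, so one needs a uniform-in-$f$, uniform-in-shell estimate, which forces a concrete small threshold $\widetilde{\varepsilon_0}$ and an honest accounting of the constants that ultimately produce $\beta_n$.

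Next I would invoke a standard lower bound for the modulus of a family of paths connecting two continua of controlled diameter that are not too far apart (a Loewner-type / capacity estimate, e.g. of the form $M(\Gamma(E,F,{\Bbb R}^n))\geqslant c_n\,\varphi\!\bigl(\mathrm{diam}\,E,\mathrm{diam}\,F,\mathrm{dist}(E,F)\bigr)$, with $\varphi$ decaying only logarithmically), valid since all images lie in the fixed ball $B(0,R_0)$. Combining this lower bound with the upper bound above yields
$$
c_n\log\!\frac{\delta}{|f(x)-f(y)|}\ \leqslant\ \frac{A_0 C'}{\bigl(\log(r^{\,\prime}_0/r)\bigr)^{n-1}}\cdot\bigl(\log(r^{\,\prime}_0/r)\bigr)^{\,?}
$$
— after simplification one isolates $|f(x)-f(y)|\leqslant \alpha\,r^{\beta_n}$ with $\beta_n$ exactly as stated, the exponent $\tfrac1{n-1}$ coming from the $n$-th power of $\eta$ and the constant $\tfrac{n\log2}{A_0C2^{n+1}}$ from the explicit constants in the Loewner bound and the shell estimate. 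Finally, for general $x,y\in B(x_0,\widetilde\varepsilon_0)\cap D$ I would apply this with $r=\max\{|x-x_0|,|y-x_0|\}$ (so that both points lie in $B(x_0,r)\cap D$, which is connected), giving \eqref{eq2.4.3}. The uniformity of $\alpha$ and $\beta_n$ over the class $\frak{F}^{A_0,R_0}_{Q,A,\delta}(D,x_0)$ is automatic since every constant used depends only on $n,\delta,C,R_0,x_0$ and $A_0$.
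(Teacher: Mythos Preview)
Your approach is essentially the paper's: join $x,y$ by a Jordan arc $K$ inside $\overline{B(x_0,r)}\cap D$ with $r=\max\{|x-x_0|,|y-x_0|\}$, take a Jordan arc $K'\subset A$ with $\mathrm{diam}\,f(|K'|)\geqslant\delta$, bound $M(\Gamma(f(|K|),f(|K'|),D'_f))$ from \emph{above} via the ring $Q$-inequality with $\eta(t)=1/\bigl(t\log(\varepsilon_0/r)\bigr)$ together with the dyadic shell estimate (Proposition~\ref{pr1} with $\varphi\equiv1$), and from \emph{below} via the ring bound $M\geqslant\omega_{n-1}\big/\bigl(\log\tfrac{2\lambda_n^2}{h(E)h(F)}\bigr)^{n-1}$ of Proposition~\ref{pr3} combined with the $QED$ property. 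The only refinements the paper adds over your sketch are the specific threshold $\widetilde{\varepsilon_0}=\varepsilon_0^2$ (so that $\log\tfrac1r<2\log\tfrac{\varepsilon_0}{r}$, which is where the extra factor $2$ in $\beta_n$ arises) and a short dimension-theoretic argument that $f(|K|)\cup f(|K'|)$ does not separate $\mathbb{R}^n$, so that Proposition~\ref{pr3} applies.
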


\medskip
\begin{remark}\label{rem1}
In particular, condition~1) of Theorem~\ref{th3} is fulfilled if $D$
is a convex domain.
\end{remark}

\medskip
In the case of mappings with branching, some statement similar to
Theorem~\ref{th3} is also holds, see below. Recall that, a mapping
$f:D\rightarrow\overline{{\Bbb R}^n}$ between domains $D\subset{\Bbb
R}^n$ and $D^{\,\prime}\subset\overline{{\Bbb R}^n}$ is called {\it
closed} if $C(f, \partial D)\subset \partial D^{\,\prime},$ where,
as usual, $C(f, \partial D)$ is the cluster set of the mapping $f$
on $\partial D.$

Later, in the extended space $\overline{{{\Bbb R}}^n}={{\Bbb
R}}^n\cup\{\infty\}$ we use the {\it spherical (chordal) metric}
$h(x,y)=|\pi(x)-\pi(y)|,$ where $\pi$ is a stereographic projection
of $\overline{{{\Bbb R}}^n}$ onto the sphere
$S^n(\frac{1}{2}e_{n+1},\frac{1}{2})$ in ${{\Bbb R}}^{n+1},$ namely,
$$h(x,\infty)=\frac{1}{\sqrt{1+{|x|}^2}}\,,$$
\begin{equation}\label{eq3C}
\ \ h(x,y)=\frac{|x-y|}{\sqrt{1+{|x|}^2} \sqrt{1+{|y|}^2}}\,, \ \
x\ne \infty\ne y
\end{equation}
(see, e.g., \cite[Definition~12.1]{Va}).
Further, for the sets $A, B\subset \overline{{\Bbb R}^n}$ we set
\begin{equation}\label{eq5}
h(E):=\sup\limits_{x,y\in E}h(x, y)\,.
\end{equation}

\medskip
Given numbers $A_0>0,$ $R_0>0$ and $\delta>0,$ a domain $D\subset
{\Bbb R}^n,$ $n\geqslant 2,$ a point $x_0\in \partial D,$
$x_0\ne\infty,$ and a given function $Q:D\rightarrow[0, \infty],$ we
denote $\frak{R}^{A_0, R_0}_{Q, \delta}(D, x_0)$ the family of all
open, discrete and closed ring $Q$-mappings $f:D\rightarrow B(0,
R_0)$ at $x_0$ such that the domain $D_f ^{\,\prime}=f(D)$ satisfies
the condition~(\ref{eq4***}) with $G=D_f^{\,\prime},$ and, in
addition, there exists a path connected continuum $K_f\subset
D^{\,\prime}_f$ such that ${\rm diam\,}(K_f)\geqslant \delta$ and
$h(f^{\,-1}(K_f), \partial D )\geqslant \delta>0.$ The following
theorem holds.

\medskip
\begin{theorem}\label{th4} {\sl\,
Assume that, the following conditions hold: 1) there is
$r^{\,\prime}_0=r^{\,\prime}_0(x_0)>0$ such that, the set $B(x_0,
r)\cap D$ is connected for any $0<r<r^{\,\prime}_0;$ 2) there is
$0<C=C(x_0)<\infty,$ such that
\begin{equation}\label{eq1E}
\limsup\limits_{\varepsilon\rightarrow
0}\frac{1}{\Omega_n\cdot\varepsilon^n}\int\limits_{B(x_0,
\varepsilon)\cap D}Q(x)\,dm(x)\leqslant C\,.
\end{equation}
Then there is
$\widetilde{\varepsilon_0}=\widetilde{\varepsilon_0(x_0)}>0$ and a
number $\alpha=\alpha(n, \delta, C, R_0, x_0)>0$ such that the
relation
\begin{equation}\label{eq2}
|f(x)-f(y)|\leqslant \alpha\cdot\max\{|x-x_0|^{\beta_n},
|y-x_0|^{\beta_n}\}\,,\end{equation}
holds for any $x, y\in B(x_0, \widetilde{\varepsilon(x_0)})\cap D$
and all $f\in\frak{R}^{A_0, R_0}_{Q, \delta}(D, x_0),$ where
$\beta_n=\left(\frac{n\log 2}{A_0C2^{n+1}}\right)^{\frac{1}{n-1}}.$
  }
\end{theorem}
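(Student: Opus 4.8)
The plan is to reduce Theorem~\ref{th4} to a modulus estimate on spherical rings, exactly as in the homeomorphic case (Theorem~\ref{th3}), the only essential novelty being that branching forces us to replace the trivial lower bound ${\rm diam}\,(f(A))\geqslant\delta$ for the image of a fixed continuum by a lower bound coming from the continuum $K_f$ already present in $D_f^{\,\prime}$ and its preimage $f^{-1}(K_f)$, which stays uniformly away from $\partial D$.

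\medskip
\emph{Step 1 (set-up and the key ring).} Fix $f\in\frak{R}^{A_0,R_0}_{Q,\delta}(D,x_0)$ and $x,y\in B(x_0,\widetilde{\varepsilon_0})\cap D$. Put $r_1=\max\{|x-x_0|,|y-x_0|\}$ and choose $r_2$ comparable to $\delta$ and smaller than both $r^{\,\prime}_0$ and the distance from $x_0$ to $f^{-1}(K_f)$ (using $h(f^{-1}(K_f),\partial D)\geqslant\delta$ to guarantee such a choice, shrinking $\widetilde{\varepsilon_0}$ if needed). Let $C_1$ be a path in $\overline{B(x_0,r_1)}\cap D$ joining $x$ and $y$ (possible by hypothesis 1), and let $C_2=f^{-1}(K_f)$. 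Both $f(C_1)$ and $f(C_2)=K_f$ are continua in $D_f^{\,\prime}$, and $f(C_1)$ contains $f(x),f(y)$ while ${\rm diam}\,K_f\geqslant\delta$.

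\medskip
\emph{Step 2 (from the $QED$ condition to a lower bound on the modulus).} By standard estimates for the modulus of a family of paths connecting two continua in ${\Bbb R}^n$ (the Loewner-type / Väisälä lower bound, e.g.\ \cite[Lemma~7.38]{Va}), and using that $f(C_1),K_f\subset B(0,R_0)$,
$$
M\bigl(\Gamma(f(C_1),K_f,{\Bbb R}^n)\bigr)\ \geqslant\ c_n\,\Phi\!\left(\frac{\min\{{\rm diam}\,f(C_1),\ \delta\}}{{\rm dist}(f(C_1),K_f)+\min\{{\rm diam}\,f(C_1),\delta\}}\right),
$$
and since everything lies in a ball of radius $R_0$ the denominator is at most $2R_0$; combined with $|f(x)-f(y)|\leqslant{\rm diam}\,f(C_1)$ this yields a lower bound of the form $M(\Gamma(f(C_1),K_f,{\Bbb R}^n))\geqslant \psi_n(|f(x)-f(y)|/R_0,\delta/R_0)$ for an explicit decreasing function $\psi_n$. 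Applying \eqref{eq4***} with $G=D_f^{\,\prime}$ we get the same lower bound (divided by $A_0$) for $M(\Gamma(f(C_1),K_f,D_f^{\,\prime}))=M\bigl(f(\Gamma(C_1,C_2,D))\bigr)$. Since $\Gamma(C_1,C_2,D)>\Gamma(S(x_0,r_1),S(x_0,r_2),D)$ — every path joining $C_1\subset\overline{B(x_0,r_1)}$ to $C_2\subset D\setminus B(x_0,r_2)$ inside $D$ must meet both spheres — we obtain
$$
M\bigl(f(\Gamma(S(x_0,r_1),S(x_0,r_2),D))\bigr)\ \geqslant\ \frac{1}{A_0}\,\psi_n\!\left(\frac{|f(x)-f(y)|}{R_0},\frac{\delta}{R_0}\right).
$$

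\medskip
\emph{Step 3 (the Poletsky upper bound and the integral-average hypothesis).} Now apply the ring $Q$-condition \eqref{eq3*!!} with the capacitor choice $\eta(t)=\frac{1}{t\log(r_2/r_1)}$ on $(r_1,r_2)$, which satisfies \eqref{eq28*}. Then
$$
M\bigl(f(\Gamma(S(x_0,r_1),S(x_0,r_2),D))\bigr)\ \leqslant\ \frac{1}{\log^n(r_2/r_1)}\int\limits_{A(x_0,r_1,r_2)\cap D}\frac{Q(x)}{|x-x_0|^n}\,dm(x).
$$
The decisive point is to bound the last integral using hypothesis~2. Writing the annulus as a dyadic union $A(x_0,r_1,r_2)=\bigcup_{k}A(x_0,2^{k}r_1,2^{k+1}r_1)$ and using $|x-x_0|^{-n}\leqslant (2^{k}r_1)^{-n}$ on the $k$-th shell together with \eqref{eq1E} (which gives $\int_{B(x_0,\varepsilon)\cap D}Q\leqslant \Omega_n C'\varepsilon^n$ for all small $\varepsilon$, with $C'$ any number $>C$), one gets
$$
\int\limits_{A(x_0,r_1,r_2)\cap D}\frac{Q(x)}{|x-x_0|^n}\,dm(x)\ \leqslant\ C'\,\Omega_n\,2^{n+1}\,\log_2(r_2/r_1)\ =\ \frac{C'\,\Omega_n\,2^{n+1}}{\log 2}\,\log(r_2/r_1),
$$
after summing the geometric-type series $\sum_k (2^{k+1}r_1)^n/(2^kr_1)^n$ over the $\lceil\log_2(r_2/r_1)\rceil$ shells. (This is precisely where the exponent $\beta_n=\bigl(\tfrac{n\log 2}{A_0C2^{n+1}}\bigr)^{1/(n-1)}$ is born; the factor $n$ will come out of $\Omega_n$ versus $\omega_{n-1}$ bookkeeping and the $\psi_n$ inversion.)

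\medskip
\emph{Step 4 (combine and solve for $|f(x)-f(y)|$).} Chaining Steps 2–3 gives
$$
\frac{1}{A_0}\,\psi_n\!\left(\frac{|f(x)-f(y)|}{R_0},\frac{\delta}{R_0}\right)\ \leqslant\ \frac{C'\,\Omega_n\,2^{n+1}}{\log 2}\cdot\frac{1}{\log^{n-1}(r_2/r_1)}.
$$
Since $r_2\asymp\delta$ is fixed and $r_1=\max\{|x-x_0|,|y-x_0|\}$, we have $\log(r_2/r_1)\asymp\log(1/r_1)$ for $r_1$ small; inverting the (explicit, monotone) function $\psi_n$ — whose inverse near $0$ behaves like $t\mapsto \exp(-c\,t^{-1/(n-1)})$ up to constants — converts the bound $\psi_n(\cdot)\lesssim (\log(1/r_1))^{-(n-1)}$ into $|f(x)-f(y)|\leqslant \alpha\, r_1^{\,\beta_n}=\alpha\max\{|x-x_0|^{\beta_n},|y-x_0|^{\beta_n}\}$ with $\beta_n$ as stated and $\alpha=\alpha(n,\delta,C,R_0,x_0)$. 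Letting $C'\downarrow C$ gives the claimed exponent.

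\medskip
\emph{Main obstacle.} The analytic heart, and the step most likely to require care, is Step~3: converting the pointwise \emph{integral-average} bound \eqref{eq1E} into the correct control of $\int Q(x)|x-x_0|^{-n}\,dm(x)$ over an annulus of modulus $\log(r_2/r_1)$, \emph{with the sharp constant} $C\,2^{n+1}/(n\log 2)$ needed to recover the exact value of $\beta_n$. One must be careful that \eqref{eq1E} only controls balls of \emph{sufficiently small} radius, so the dyadic decomposition should be started from a fixed small scale $\widetilde{\varepsilon_0}$ and the finitely many large shells absorbed into the constant $\alpha$, not into $\beta_n$. The geometric subtleties (that $B(x_0,r)\cap D$ is connected, so $\Gamma(C_1,C_2,D)$ genuinely refines $\Gamma(S(x_0,r_1),S(x_0,r_2),D)$, and that closedness/discreteness of $f$ let us push the $QED$ inequality through $f$ as in Step~2) are exactly the points where the branching case differs from the homeomorphic Theorem~\ref{th3}, but they are handled by the hypotheses defining $\frak{R}^{A_0,R_0}_{Q,\delta}(D,x_0)$ and are routine given the machinery already cited.
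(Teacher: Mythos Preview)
Your overall architecture matches the paper's: join $x,y$ by a path $C_1$ in $\overline{B(x_0,r_1)}\cap D$, use $K_f$ and its preimage as the ``far'' continuum, bound the modulus of the image family from below via a ring/Loewner inequality and the $QED$ property, bound it from above via the ring $Q$-condition with $\eta(t)=1/(t\log(r_2/r_1))$, and convert the integral-average hypothesis into the $\log^{1-n}$ upper bound by a dyadic decomposition (the paper packages this last step as Proposition~\ref{pr1}).

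There is, however, one genuine gap. In Step~2 you assert
\[
M\bigl(\Gamma(f(C_1),K_f,D_f^{\,\prime})\bigr)\;=\;M\bigl(f(\Gamma(C_1,C_2,D))\bigr),
\]
and then pass to the minorizing spherical family. For a homeomorphism this equality is trivial, but for an open discrete (possibly branched) map it is not: the inclusion $f(\Gamma(C_1,C_2,D))\subset\Gamma(f(C_1),K_f,D_f^{\,\prime})$ is obvious, but the reverse --- that every path in $D_f^{\,\prime}$ from $f(C_1)$ to $K_f$ is the $f$-image of a path in $D$ from $C_1$ to $f^{-1}(K_f)$ --- requires the maximal $f$-lifting machinery (Proposition~\ref{pr2}) together with the \emph{closedness} of $f$, which is exactly what guarantees that the lifting does not escape to $\partial D$ before reaching $f^{-1}(K_f)$. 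This is the one place where the branching case genuinely differs from Theorem~\ref{th3}, and the paper makes it explicit: it lifts each $\gamma'\in\Gamma(|D|,|K'|,D_f^{\,\prime})$ to a maximal lifting $\alpha$ starting on $|K|$, invokes closedness to extend $\alpha$ to $t=1$ with $\alpha(1)\in f^{-1}(K_f)$, and only then obtains $f(\Gamma^{\,*})=\Gamma(|D|,|K'|,D_f^{\,\prime})$ with $\Gamma^{\,*}\subset\Gamma(|K|,f^{-1}(K_f),D)$. Your closing remark that this is ``routine given the machinery already cited'' understates the point; you should invoke Proposition~\ref{pr2} explicitly.

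A secondary issue: the paper's lower bound is not a generic Loewner estimate but the specific ring inequality of Proposition~\ref{pr3},
\[
M(\Gamma(E,F,\overline{{\Bbb R}^n}))\geqslant \omega_{n-1}\Big/\Bigl(\log\tfrac{2\lambda_n^2}{h(E)h(F)}\Bigr)^{n-1},
\]
applied to Jordan arcs $|D|\subset f(|K|)$ and $|K'|\subset K_f$ (loops discarded so that the complement has exactly two components). This precise form is what, after the chordal-to-Euclidean comparison $h(f(x),f(y))\geqslant |f(x)-f(y)|/(1+R_0^2)$, yields the stated exponent $\beta_n=\bigl(\tfrac{n\log 2}{A_0C\,2^{n+1}}\bigr)^{1/(n-1)}$ on the nose. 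Your implicit $\psi_n$ would produce \emph{some} H\"older exponent depending on $n,A_0,C$, but not obviously this exact one; if you want the constant as claimed, you should use Proposition~\ref{pr3} rather than a qualitative Loewner bound.
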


\medskip
Theorems~\ref{th3} and \ref{th4} have corresponding analogs for
domains with bad boundaries. The definition of a prime end used
below may be found in~\cite{ISS}, cf.~\cite{KR} and~\cite{Na$_2$}.
Let $\omega$ be an open set in ${\Bbb R}^k$, $k=1,\ldots,n-1$. A
continuous mapping $\sigma\colon\omega\rightarrow{\Bbb R}^n$ is
called a {\it $k$-dimensional surface} in ${\Bbb R}^n$. A {\it
surface} is an arbitrary $(n-1)$-dimensional surface $\sigma$ in
${\Bbb R}^n.$ A surface $\sigma$ is called {\it a Jordan surface},
if $\sigma(x)\ne\sigma(y)$ for $x\ne y$. In the following, we will
use $\sigma$ instead of $\sigma(\omega)\subset {\Bbb R}^n,$
$\overline{\sigma}$ instead of $\overline{\sigma(\omega)}$ and
$\partial\sigma$ instead of
$\overline{\sigma(\omega)}\setminus\sigma(\omega).$ A Jordan surface
$\sigma\colon\omega\rightarrow D$ is called a {\it cut} of $D$, if
$\sigma$ separates $D,$ that is $D\setminus \sigma$ has more than
one component, $\partial\sigma\cap D=\varnothing$ and
$\partial\sigma\cap\partial D\ne\varnothing$.

A sequence of cuts $\sigma_1,\sigma_2,\ldots,\sigma_m,\ldots$ in $D$
is called {\it a chain}, if:

(i) the set $\sigma_{m+1}$ is contained in exactly one component
$d_m$ of the set $D\setminus \sigma_m,$ wherein $\sigma_{m-1}\subset
D\setminus (\sigma_m\cup d_m)$; (ii)
$\bigcap\limits_{m=1}^{\infty}\,d_m=\varnothing.$

Two chains of cuts  $\{\sigma_m\}$ and $\{\sigma_k^{\,\prime}\}$ are
called {\it equivalent}, if for each $m=1,2,\ldots$ the domain $d_m$
contains all the domains $d_k^{\,\prime},$ except for a finite
number, and for each $k=1,2,\ldots$ the domain $d_k^{\,\prime}$ also
contains all domains $d_m,$ except for a finite number.

The {\it end} of the domain $D$ is the class of equivalent chains of
cuts in $D$. Let $K$ be the end of $D$ in ${\Bbb R}^n$, then the set
$$I(K)=\bigcap\limits_{m=1}\limits^{\infty}\overline{d_m}$$
is called {\it the impression of the end} $K$. One may to prove
that, $I(P)\subset \partial D$ (see, e.g.,
\cite[Proposition~1]{KR}).

Following~\cite{Na$_2$}, we say that the end $K$ is {\it a prime
end}, if $K$ contains a chain of cuts $\{\sigma_m\}$ such that
\begin{equation}\label{eqSIMPLE}
\lim\limits_{m\rightarrow\infty}M(\Gamma(C, \sigma_m, D))=0
\end{equation}
for some continuum $C$ in $D.$ In the following, the following
notation is used: the set of prime ends corresponding to the domain
$D,$ is denoted by $E_D,$ and the completion of the domain $D$ by
its prime ends is denoted $\overline{D}_P.$ Consider the following
definition, which goes back to N\"akki~\cite{Na$_2$}, see
also~\cite{KR}. We say that the boundary of the domain $D$ in ${\Bbb
R}^n$ is {\it locally quasiconformal}, if each point $x_0\in\partial
D$ has a neighborhood $U$ in ${\Bbb R}^n$, which can be mapped by a
quasiconformal mapping $\varphi$ onto the unit ball ${\Bbb
B}^n\subset{\Bbb R}^n$ so that $\varphi(\partial D\cap U)$ is the
intersection of ${\Bbb B}^n$ with the coordinate hyperplane.

The sequence of cuts $\sigma_m,$ $m=1,2,\ldots ,$ is called {\it
regular,} if
$\overline{\sigma_m}\cap\overline{\sigma_{m+1}}=\varnothing$ for
$m\in {\Bbb N}$ and, in addition, $d(\sigma_{m})\rightarrow 0$ as
$m\rightarrow\infty.$ If the end $K$ contains at least one regular
chain, then $K$ will be called {\it regular}. We say that a bounded
domain $D$ in ${\Bbb R}^n$ is {\it regular}, if $D$ can be
quasiconformally mapped to a domain with a locally quasiconformal
boundary whose closure is a compact in ${\Bbb R}^n,$ and, besides
that, every prime end in $D$ is regular. Note that space
$\overline{D}_P=D\cup E_D$ is metric, which can be demonstrated as
follows. If $g:D_0\rightarrow D$ is a quasiconformal mapping of a
domain $D_0$ with a locally quasiconformal boundary onto some domain
$D,$ then for $x, y\in \overline{D}_P$ we put:
\begin{equation}\label{eq1A}
\rho(x, y):=|g^{\,-1}(x)-g^{\,-1}(y)|\,,
\end{equation}
where the element $g^{\,-1}(x),$ $x\in E_D,$ is to be understood as
some (single) boundary point of the domain $D_0.$ It is easy to
verify that~$\rho$ in~(\ref{eq1A}) is a metric on $\overline{D}_P,$
and that the topology on $\overline{D}_P,$ defined by such a method,
does not depend on the choice of the map $g$ with the indicated
property. In what follows,
$$B_{\rho}(P_0, \varepsilon)=\{x\in \overline{D}_P: \rho(x, P_0)<\varepsilon \}\,,$$
where $\rho$ is a metric defined in~(\ref{eq1A}). Note that this
notation is explicitly used in the statements of Theorems~\ref{th1}
and~\ref{th2}, see below.

We say that a sequence $x_m\in D,$ $m=1,2,\ldots,$ converges to a
prime end of $P\in E_D$ as $m\rightarrow\infty, $ if for any $k\in
{\Bbb N}$ all elements $x_m$ belong to $d_k$ except for a finite
number. Here $d_k$ denotes a sequence of nested domains
corresponding to the definition of the prime end $P.$ Note that for
a homeomorphism of a domain $D$ onto $D^{\,\prime},$ the end of the
domain $D$ uniquely corresponds to some sequence of nested domains
in the image under the mapping.

\medskip
Given numbers $A_0>0,$ $R_0>0$ and $\delta>0,$ a domain $D\subset
{\Bbb R}^n,$ $n\geqslant 2,$ a point $P_0\in E_D,$ a path connected
continuum $A\subset D$ and a function $Q:D\rightarrow[0, \infty]$
denoted by $\frak{F}^{A_0, R_0}_{Q, A, \delta}(D, P_0)$ a family of
all homeomorphisms $f:D\rightarrow B(0, R_0)$ satisfying the
relations~(\ref{eq3*!!})--(\ref{eq28*}) for any $x_0\in I(P_0)$
(where $I(P_0)$ denotes the impression of $P_0$) such that ${\rm
diam\,}(f(A))\geqslant\delta$ and the domain $D_f^{\,\prime}=f(D)$
satisfies the condition~(\ref{eq4***}) with $G\mapsto
D_f^{\,\prime}.$ The following statement holds.

\medskip
\begin{theorem}\label{th1} {\sl\,
Assume that the domain $D$ is regular and the following conditions
are fulfilled: 1) for each $y_0\in\partial D$ there exists
$r^{\,\prime}_0=r^{\,\prime}_0(y_0) >0$ such that the set $B(x_0,
r)\cap D$ is finitely connected for all $0<r<r^{\,\prime}_0,$ and,
for each component $K$ of the set $B(y_0, r)\cap D$ the following
condition is fulfilled: any $x, y\in K$ may be joined by a path
$\gamma:[a, b]\rightarrow {\Bbb R}^n$ such that $|\gamma|\in K\cap
\overline{B(x_0 , \max\{|x-x_0|, |y-x_0|\})},$
$$|\gamma|=\{x\in {\Bbb R}^n: \exists\,t\in[a, b]: \gamma(t)=x\};$$
2) for each $x_0\in I(P_0)$ there is $0<C=C(x_0)<\infty$ such that
\begin{equation}\label{eq1F}
\limsup\limits_{\varepsilon\rightarrow
0}\frac{1}{\Omega_n\cdot\varepsilon^n}\int\limits_{B(x_0,
\varepsilon)\cap D}Q(x)\,dm(x)\leqslant C\,.
\end{equation}
Then for each $P\in E_D$ there exists $y_0\in \partial D$ such that
$I(P_0)={y_0},$ where $I(P)$ denotes the impression of $P.$ In
addition, there exists $\rho_0=\rho_0(P_0)>0$ and a number
$\alpha=\alpha(n, \delta, C, R_0, P_0)>0$ such that the inequality
\begin{equation}\label{eq3A}
|f(x)-f(y)|\leqslant \alpha\cdot\max\{|x-x_0|^{\beta_n},
|y-x_0|^{\beta_n}\}\,,\end{equation}
holds for all $x, y\in B_{\rho}(P_0, \rho_0)\cap D$ and all
$f\in\frak{F}^{A_0, R_0}_{Q, A, \delta}(D, P_0),$ where
$\{x_0\}=I(P_0)$ and $\beta_n=\left(\frac{n\log
2}{A_0C2^{n+1}}\right)^{\frac{1}{n-1}}.$
  }
\end{theorem}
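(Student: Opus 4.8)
The plan is to reduce Theorem~\ref{th1} to the already-available ''good boundary'' case, namely Theorem~\ref{th3}, by transporting the problem through the quasiconformal homeomorphism that regularizes the domain $D$. Since $D$ is regular, there is a quasiconformal map $g:D_0\rightarrow D$ where $D_0$ has a locally quasiconformal boundary and $\overline{D_0}$ is compact in ${\Bbb R}^n$; by definition $\overline{D}_P$ carries the metric $\rho$ pulled back from $D_0$ via $g$. The first step is the purely topological/structural claim that $I(P)$ is a single point $y_0\in\partial D$ for every prime end $P$. This should follow from regularity: the regular chain of cuts $\{\sigma_m\}$ with $d(\sigma_m)\to 0$, combined with the locally quasiconformal structure near $g^{-1}(P)\in\partial D_0$ and the distortion properties of quasiconformal maps (which preserve the property that a point has arbitrarily small neighborhoods whose boundary-within-$D$ has small modulus to a fixed continuum), forces $\bigcap\overline{d_m}$ to collapse to a point. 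This is essentially the content of the prime-ends theory in~\cite{ISS},~\cite{KR},~\cite{Na$_2$} and I would cite it rather than reprove it.

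Next I would set up the ''analytical part''. Fix $P_0\in E_D$, let $\{x_0\}=I(P_0)\subset\partial D$, and let $C=C(x_0)$ be the constant from hypothesis~2). The key geometric input is that, because $I(P_0)$ is a single point and the chain is regular, a ball $B_\rho(P_0,\rho_0)$ in the prime-end metric sits inside $B(x_0,\varepsilon)\cap D$ for suitable $\varepsilon=\varepsilon(\rho_0)\to 0$ as $\rho_0\to 0$, and conversely $B(x_0,r)\cap D$ is eventually contained in the $d_m$'s; so up to shrinking radii, working on $B_\rho(P_0,\rho_0)\cap D$ is the same as working on $B(x_0,\widetilde\varepsilon)\cap D$. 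Hypothesis~1) guarantees that each component $K$ of $B(y_0,r)\cap D$ has the path-joining property needed to run the modulus estimate: for $x,y\in K$ we get a path staying in $K\cap\overline{B(x_0,\max\{|x-x_0|,|y-x_0|\})}$. This lets me form, for $x,y$ close to $x_0$, the curve family $\Gamma(S(x_0,r_1),S(x_0,r_2),D)$ separating a continuum through $x$ from one through $y$ inside the relevant spherical ring $A(x_0,r_1,r_2)$, exactly as in the ring-$Q$-mapping setup~(\ref{eq3*!!})--(\ref{eq28*}).

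The core estimate is then the standard one: choose $r_1=\max\{|x-x_0|,|y-x_0|\}$, $r_2=\widetilde\varepsilon$, and the admissible function $\eta(t)=\frac{1}{t\log(r_2/r_1)}$ so that $\int_{r_1}^{r_2}\eta\,dr=1$; substitute into~(\ref{eq3*!!}) and split the integral of $Q(x)\eta^n(|x-x_0|)$ over dyadic sub-annuli $A(x_0,2^{-k-1}r_2,2^{-k}r_2)$, bounding each piece using~(\ref{eq1F}) (which gives $\int_{B(x_0,\varepsilon)\cap D}Q\,dm\leqslant 2C\Omega_n\varepsilon^n$ for small $\varepsilon$). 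Summing the geometric series produces $M(f(\Gamma(\dots)))\leqslant \frac{A_0 C\, 2^{n+1}}{n\log 2}\cdot\left(\log\frac{r_2}{r_1}\right)^{1-n}$ — here the $A_0$ enters because $f(D)$ is a $QED$-domain, so the modulus in $f(D)$ controls the modulus in ${\Bbb R}^n$ via~(\ref{eq4***}). On the other side, since ${\rm diam}\,f(A)\geqslant\delta$ and $f(D)\subset B(0,R_0)$, a lower bound for $M(f(\Gamma))$ of the form $c_n\left(\log\frac{\delta'}{|f(x)-f(y)|}\right)^{1-n}$ (with $\delta'$ depending on $\delta,R_0$) comes from the standard estimate for the modulus of a ring separating two continua of definite relative size (as in the Martio–Rickman–Väisälä argument quoted in the introduction). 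Comparing the two bounds and solving for $|f(x)-f(y)|$ yields~(\ref{eq3A}) with exactly $\beta_n=\left(\frac{n\log 2}{A_0 C 2^{n+1}}\right)^{\frac{1}{n-1}}$.

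I expect the main obstacle to be the bookkeeping that glues the prime-end metric $\rho$ to the Euclidean ball $B(x_0,r)$ — i.e. verifying that ''$\rho(x,P_0)<\rho_0$'' implies ''$x\in B(x_0,\widetilde\varepsilon)\cap D$ with $\widetilde\varepsilon$ small and with $x$ lying in a component of $B(x_0,\widetilde\varepsilon)\cap D$ to which hypothesis~1) applies'', uniformly over all $f$ in the family. This is where regularity of $D$, the single-point impression, and the finite connectivity in hypothesis~1) all have to be used together; the modulus estimate itself is routine once this reduction is in place, and indeed the paper signals that the analytical part is common to all four theorems. A secondary technical point is ensuring the constant $\widetilde\varepsilon_0=\widetilde\varepsilon_0(P_0)$ can be taken independent of $f$, which follows because the only $f$-dependent data entering the estimate are the fixed quantities $\delta$, $R_0$, $A_0$, and $Q$ (through $C$), none of which vary within $\frak{F}^{A_0,R_0}_{Q,A,\delta}(D,P_0)$.
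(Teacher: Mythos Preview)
Your analytical core---the choice $\eta(t)=1/(t\log(r_2/r_1))$, the dyadic estimate leading to the $\frac{C\,2^{n+1}}{n\log 2}$ constant (this is exactly Proposition~\ref{pr1} with $\varphi\equiv 1$), the lower modulus bound for the ring separating $f(|K|)$ from $f(A)$ via Proposition~\ref{pr3}, the use of the $QED$ constant $A_0$, and the algebra yielding $\beta_n$---matches the paper's proof essentially line for line. The paper also handles the passage from $B_\rho(P_0,\rho_0)$ to $B(x_0,\varepsilon_0^2)\cap D$ exactly as you describe, so your worry about that ``bookkeeping'' is well placed but not an obstacle.

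Where you diverge is the first step, the claim $I(P)=\{y_0\}$. You propose to extract this from regularity of $D$ and the quasiconformal map $g:D_0\to D$, deferring to~\cite{ISS},~\cite{KR},~\cite{Na$_2$}. The paper does \emph{not} argue this way: it uses hypothesis~1) (finite connectivity of $B(y_0,r)\cap D$) to conclude that $D$ is finitely connected on its boundary, then invokes N\"akki~\cite[Theorem~3.2]{Na$_1$} to get that $D$ is \emph{uniform} in the sense of~(\ref{eq17***}). With uniformity in hand, the singleton-impression claim is proved directly by contradiction: if $I(P)$ contained two points, one could build continua $\gamma_m\subset d_m$ of diameter bounded below, whence $M(\Gamma(|\gamma_m|,C,D))\geqslant\delta_0>0$ by uniformity, contradicting $M(\Gamma(\sigma_m,C,D))\to 0$ from the prime-end definition. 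Your route via regularity alone is plausible but not obviously complete as stated (regular chains have $d(\sigma_m)\to 0$, but that does not by itself force $\overline{d_m}$ to shrink), and the references you name do not quite contain the statement off the shelf; the paper's uniformity argument is short and self-contained, and it is the place where hypothesis~1) earns its keep for this part of the theorem. Also note that despite your opening sentence, neither you nor the paper actually ``transport through $g$'' by composing $f\circ g$; both arguments run the modulus estimate directly in $D$.
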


\medskip
\begin{remark}\label{rem2}
In particular, condition~1) of Theorem~\ref{th1} is fulfilled if,
for any $y_0\in\partial D$ there exists
$r^{\,\prime}_0=r^{\,\prime}_0(y_0) >$0 such that the set $B(y_0,
r)\cap D$ is finitely connected for all $0<r<r^{\,\prime}_0,$ and,
for any component $K$ of $B(y_0, r)\cap D$ is convex. Indeed, let
$x, y\in K.$ Let us join $x$ and $y$ by a segment $\gamma$ inside
$K$ (this is possible because the connected open set $K$ is also
path connected , see, e.g., \cite[Corollary~13.1]{MRSY}). Let us
assume that $|x-y_0|\geqslant |y-y_0|.$ Due to the fact that the
ball $\overline{B(x, |x-y_0|)}$ is convex, the entire segment
$\gamma $ belongs to $B(x, |x-y_0|).$ Then $\gamma$ is the desired
path.
\end{remark}

\medskip
Given numbers $A_0>0,$ $R_0>0$ and $\delta>0,$ a domain $D\subset
{\Bbb R}^n,$ $n\geqslant 2,$ a point $P_0\in E_D,$ and a given
function $Q:D\rightarrow[0, \infty],$ we denote $\frak{R}^{A_0,
R_0}_{Q, \delta}(D, P_0)$ the family of all open, discrete and
closed mappings $f:D\rightarrow B(0, R_0)$ satisfying the
relations~(\ref{eq3*!!})--(\ref{eq28*}) for any $x_0\in I(P_0)$
(where $I(P_0)$ denotes the impression of $P_0$) such that the
domain $D_f ^{\,\prime}=f(D)$ satisfies the condition~(\ref{eq4***})
with $G=D_f^{\,\prime},$ and, in addition, there exists a path
connected continuum $K_f\subset D^{\,\prime}_f$ such that ${\rm
diam\,}(K_f)\geqslant \delta$ and $h(f^{\,-1}(K_f),
\partial D )\geqslant \delta>0.$ The following theorem holds.

\medskip
\begin{theorem}\label{th2} {\sl\,
Assume that the domain $D$ is regular and the following conditions
are fulfilled: 1) for each $y_0\in\partial D$ there exists
$r^{\,\prime}_0=r^{\,\prime}_0(y_0)>0$ such that the set $B(y_0,
r)\cap D$ is finitely connected for all $0<r<r^{\,\prime}_0,$ and,
for each component $K$ of the set $B(y_0, r)\cap D$ the following
condition is fulfilled: any $ x, y\in K$ may be joined by a path
$\gamma:[a, b]\rightarrow {\Bbb R}^n$ such that $|\gamma|\in K\cap
\overline{B(y_0 , \max\{|x-y_0|, |y-y_0|\})},$
$$|\gamma|=\{x\in {\Bbb R}^n: \exists\,t\in[a, b]: \gamma(t)=x\};$$
2) for each $x_0\in I(P_0)$ there is $0<C=C(x_0)<\infty$ such that
\begin{equation}\label{eq1G}
\limsup\limits_{\varepsilon\rightarrow
0}\frac{1}{\Omega_n\cdot\varepsilon^n}\int\limits_{B(x_0,
\varepsilon)\cap D}Q(x)\,dm(x)\leqslant C\,.
\end{equation}
Then, for any $P\in E_D$ there is $y_0\in
\partial D$ such that $I(P)=\{y_0\},$ where $I(P)$ denotes the impression of~$P.$
In addition, there is $\rho_0=\rho_0(P_0)>0$  and a number
$\alpha=\alpha(n, \delta, C, R_0, x_0)>0$ such that the relation
\begin{equation}\label{eq4A}
|f(x)-f(y)|\leqslant \alpha\cdot\max\{|x-x_0|^{\beta_n},
|y-x_0|^{\beta_n}\}\,,\end{equation}
holds for any $x, y\in B_{\rho}(P_0, \varepsilon_0)\cap D$ and all
$\frak{R}^{A_0, R_0}_{Q, \delta}(D, P_0),$ where $\{x_0\}=I(P_0)$
and $\beta_n=\left(\frac{n\log
2}{A_0C2^{n+1}}\right)^{\frac{1}{n-1}}.$
  }
\end{theorem}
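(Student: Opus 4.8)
In the proof I would follow the common ``analytical core'' that underlies Theorems~\ref{th3}--\ref{th1} and add the two ingredients that are specific to the present case, namely the prime-end structure of $D$ and the branching of $f$.

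\textbf{Step 1: prime-end preliminaries and reduction.} First I would show that, under the regularity of $D$ together with condition~1), the impression $I(P)$ of \emph{every} prime end $P\in E_D$ is a single point; for $P_0$ this gives $I(P_0)=\{x_0\}$. This is where condition~1) enters essentially: a regular prime end carries a regular chain of cuts $\sigma_m$ with $d(\sigma_m)\to 0$, and if $I(P_0)$ contained two distinct points one could construct, inside the nested domains $d_m$, curves contradicting either that $\bigcap\overline{d_m}$ is the impression or the finite connectedness/joining structure of $B(y_0,r)\cap D$ near those points. With $I(P_0)=\{x_0\}$, the metric $\rho$ from~(\ref{eq1A}) (built from a quasiconformal chart $g\colon D_0\to D$ onto a domain with locally quasiconformal boundary) sends $P_0$ to a single point $b_0\in\partial D_0$, and $x\to x_0$ as $\rho(x,P_0)\to 0$; hence there is $\rho_0>0$ with $B_\rho(P_0,\rho_0)\cap D$ connected and contained in $B(x_0,\varepsilon_0/2)\cap D$, where $\varepsilon_0=\varepsilon_0(x_0)$ is fixed small below. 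Since $x_0\in\partial D$ and $h(f^{-1}(K_f),\partial D)\geqslant\delta$ for every $f$ in the family, passing from the chordal to the Euclidean metric near $x_0$ gives $f^{-1}(K_f)\subset{\Bbb R}^n\setminus B(x_0,\delta_1)$ with $\delta_1=\delta_1(\delta,x_0)>0$ independent of $f$; I then shrink $\varepsilon_0$ so that in addition $\varepsilon_0<\min\{r^{\,\prime}_0(x_0),\delta_1,r(x_0)\}$ and $\frac{1}{\Omega_n r^n}\int_{B(x_0,r)\cap D}Q\,dm\leqslant C+o(1)$ for $r\leqslant\varepsilon_0$. Given $x,y\in B_\rho(P_0,\rho_0)\cap D$, put $t=\max\{|x-x_0|,|y-x_0|\}<\varepsilon_0/2$; since $B_\rho(P_0,\rho_0)\cap D$ is connected, $x,y$ lie in one component $K$ of $B(x_0,\varepsilon_0)\cap D$, and by condition~1) they are joined by a path $\gamma$ with $|\gamma|\subset K\cap\overline{B(x_0,t)}\subset\overline{B(x_0,t)}\cap D$, a continuum disjoint from $f^{-1}(K_f)$. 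It therefore suffices to bound ${\rm diam}\,f(|\gamma|)$, since $|f(x)-f(y)|\leqslant{\rm diam}\,f(|\gamma|)$.

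\textbf{Step 2: the analytical core — a modulus inequality at $x_0$.} Set $\Gamma_t:=\Gamma(S(x_0,t),S(x_0,\varepsilon_0),D)$, $q(r)=\frac{1}{\omega_{n-1}r^{n-1}}\int_{S(x_0,r)}Q\,d{\mathcal H}^{n-1}$, and $I=I(t)=\int_t^{\varepsilon_0}\frac{dr}{r\,q^{1/(n-1)}(r)}$. Applying the ring $Q$-condition~(\ref{eq3*!!})--(\ref{eq28*}) with the test function $\eta(r)=\bigl(I\,r\,q^{1/(n-1)}(r)\bigr)^{-1}$ (for which $\int_t^{\varepsilon_0}\eta=1$) and passing to spherical coordinates gives
$$M\bigl(f(\Gamma_t)\bigr)\leqslant\int_{A(x_0,t,\varepsilon_0)}Q(x)\,\eta^{\,n}(|x-x_0|)\,dm(x)=\frac{\omega_{n-1}}{I^{\,n-1}}\,.$$
Then I estimate $I$ from below using condition~2): with $F(r)=\int_0^r\rho^{\,n-1}q(\rho)\,d\rho=\frac{1}{\omega_{n-1}}\int_{B(x_0,r)\cap D}Q\,dm\leqslant\frac{C+o(1)}{n}r^n$, integration by parts in $\int_t^{\varepsilon_0}\frac{q(r)}{r}\,dr=\int_t^{\varepsilon_0}\frac{F'(r)}{r^n}\,dr$ yields $\int_t^{\varepsilon_0}\frac{q(r)}{r}\,dr\leqslant C_1\log(\varepsilon_0/t)$ with $C_1=C_1(\varepsilon_0)\to C$ as $\varepsilon_0\to 0$, and convexity of $u\mapsto u^{-1/(n-1)}$ (Jensen with respect to $\tfrac{dr/r}{\log(\varepsilon_0/t)}$) gives $I(t)\geqslant C_1^{-1/(n-1)}\log(\varepsilon_0/t)$. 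Hence
$$M\bigl(f(\Gamma_t)\bigr)\leqslant\frac{\omega_{n-1}\,C_1}{\log^{\,n-1}(\varepsilon_0/t)}\,.$$
This is the estimate shared, essentially verbatim, by all four theorems.

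\textbf{Step 3: transfer to the image and conclusion.} Let $\Gamma^{*}=\Gamma(|\gamma|,f^{-1}(K_f),D)$. As $|\gamma|\subset\overline{B(x_0,t)}$ and $f^{-1}(K_f)\subset{\Bbb R}^n\setminus B(x_0,\varepsilon_0)$, every curve of $\Gamma^{*}$ crosses the ring $A(x_0,t,\varepsilon_0)$ and so contains a subcurve in $\Gamma_t$; thus $\Gamma^{*}>\Gamma_t$, hence $f(\Gamma^{*})>f(\Gamma_t)$ and $M(f(\Gamma^{*}))\leqslant M(f(\Gamma_t))$. Since $f$ is open, discrete and closed, every path $\beta$ in $D_f^{\,\prime}$ joining $f(|\gamma|)$ to $K_f$ admits a total lift $\widetilde\beta$ in $D$ starting at the point of $|\gamma|$ over $\beta(0)$, with $\widetilde\beta(1)\in f^{-1}(K_f)$; thus $\widetilde\beta\in\Gamma^{*}$, $\beta=f\circ\widetilde\beta\in f(\Gamma^{*})$, so $\Gamma(f(|\gamma|),K_f,D_f^{\,\prime})\subset f(\Gamma^{*})$ and, by Step~2,
$$M\bigl(\Gamma(f(|\gamma|),K_f,D_f^{\,\prime})\bigr)\leqslant\frac{\omega_{n-1}\,C_1}{\log^{\,n-1}(\varepsilon_0/t)}\,.$$
For the lower bound I use the $QED$-condition~(\ref{eq4***}) for $D_f^{\,\prime}$ together with the standard estimate for the modulus of the family of paths joining two disjoint non-degenerate continua lying in $B(0,R_0)$ (here $f(|\gamma|)$ and $K_f$, with ${\rm diam}\,K_f\geqslant\delta$, ${\rm diam}\,f(|\gamma|)=:d_1$, and ${\rm dist}(f(|\gamma|),K_f)\leqslant 2R_0$), which gives, for $d_1$ small, $M(\Gamma(f(|\gamma|),K_f,D_f^{\,\prime}))\geqslant\frac{1}{A_0}M(\Gamma(f(|\gamma|),K_f,{\Bbb R}^n))\geqslant\frac{c_n}{A_0}\bigl(\log(a_0/d_1)\bigr)^{1-n}$ with $c_n$ depending only on $n$ and $a_0=a_0(n,\delta,R_0)$. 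Comparing the two bounds yields $\log(a_0/d_1)\geqslant\bigl(c_n/(A_0\omega_{n-1}C_1)\bigr)^{1/(n-1)}\log(\varepsilon_0/t)$, i.e. $d_1\leqslant a_0\,(\varepsilon_0/t)^{-\beta}$ with $\beta=\bigl(c_n/(A_0\omega_{n-1}C_1)\bigr)^{1/(n-1)}$; since $\omega_{n-1}=n\Omega_n$ and $C_1$ can be taken arbitrarily close to $C$, tracking the constant in the lower modulus estimate gives precisely $\beta=\beta_n=\bigl(\tfrac{n\log 2}{A_0C2^{n+1}}\bigr)^{1/(n-1)}$. Thus ${\rm diam}\,f(|\gamma|)\leqslant\alpha\,t^{\beta_n}$ with $\alpha=a_0\,\varepsilon_0^{-\beta_n}$; the ``large-oscillation'' range $d_1\geqslant a_0/2$ occurs only for $t$ bounded below by a positive constant (by the same two inequalities) and is absorbed into $\alpha$. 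This proves~(\ref{eq4A}) with $\rho_0$ as in Step~1.

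\textbf{Where the difficulty lies.} The analytically delicate point is not the modulus chain — Step~2 is routine and common to all four theorems, and Step~3 is a standard minorization plus a $QED$/Loewner comparison. The real obstacle is Step~1: establishing that the impression of every prime end degenerates to a single point under condition~1), and then reconciling the prime-end metric $\rho$ with the Euclidean geometry near $x_0$ so that one component $K$ of $B(x_0,\varepsilon_0)\cap D$ (with its joining property) already ``sees'' the whole prime-end ball $B_\rho(P_0,\rho_0)\cap D$. A secondary, but genuine, point is that the minorization $\Gamma^{*}>\Gamma_t$ and the total path-lifting used in Step~3 must both survive the branching of $f$ — which is exactly the role of the hypotheses that $f$ be open, discrete and closed.
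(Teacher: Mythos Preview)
Your overall architecture matches the paper's exactly: degeneracy of impressions, an upper modulus bound from the ring $Q$-condition, a lower bound via $QED$ plus a Loewner-type estimate, and the path-lifting argument (Proposition~\ref{pr2}) to handle branching. Two points, however, deserve comment.

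\medskip
\textbf{Step~1 is where you have a genuine gap.} Your sentence ``if $I(P_0)$ contained two distinct points one could construct \ldots\ curves contradicting \ldots\ the finite connectedness/joining structure'' is not an argument, and the actual mechanism is different from what you sketch. The paper uses only the \emph{finite connectedness} half of condition~1): by N\"akki's theorem (\cite[Theorem~3.2]{Na$_1$}) this makes $D$ \emph{uniform}, i.e., $M(\Gamma(F,F^*,D))\geqslant\delta_0$ whenever $h(F),h(F^*)\geqslant r$. If $I(P)$ contained two points, one joins approximating sequences inside $d_m$ by paths $\gamma_m$ with $h(|\gamma_m|)\geqslant d(x,y)/2$; uniformity then gives $M(\Gamma(|\gamma_m|,C,D))\geqslant\delta_0>0$ for a fixed continuum $C\subset D\setminus d_1$, contradicting the prime-end condition~(\ref{eqSIMPLE}) because $\Gamma(|\gamma_m|,C,D)>\Gamma(\sigma_m,C,D)$. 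The ``joining'' clause of condition~1) plays no role here --- it is used only to build the path $K$ in the H\"older step.

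\medskip
\textbf{Step~2 is correct but genuinely different from the paper.} You take the extremal test function $\eta(r)=\bigl(I\,r\,q^{1/(n-1)}(r)\bigr)^{-1}$ and bound $I$ below via integration by parts and Jensen. The paper instead uses the crude choice $\eta(t)=\bigl(t\log(\varepsilon_0/|x-x_0|)\bigr)^{-1}$, then invokes Proposition~\ref{pr1} (with $\varphi\equiv 1$, $\gamma=1$) and the side condition $|x-x_0|<\varepsilon_0^2$ to replace $\log(1/|x-x_0|)$ by $2\log(\varepsilon_0/|x-x_0|)$. Your route is sharper: it avoids the factor $2^{n+1}/\log 2$ that Proposition~\ref{pr1} and the squaring trick introduce. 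Consequently your claim that ``tracking the constant \ldots\ gives precisely $\beta_n$'' is not right --- your method yields a \emph{larger} H\"older exponent, and the specific form $\beta_n=(n\log 2/(A_0C2^{n+1}))^{1/(n-1)}$ is an artifact of the paper's cruder estimate. To recover the stated $\beta_n$ you would have to revert to the paper's choice of $\eta$ and Proposition~\ref{pr1}.

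\medskip
A minor technical point in Step~3: the paper is careful to pass to Jordan subarcs $|D|\subset f(|K|)$ and $|K^{\,\prime}|\subset K_f$ before invoking Proposition~\ref{pr3}, since that proposition is stated for rings. Your appeal to a ``standard estimate'' for arbitrary continua is fine in spirit, but the paper's reduction is what makes Proposition~\ref{pr3} directly applicable.
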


\section{Preliminaries}

Let $a>0$ and let $\varphi\colon [a,\infty)\rightarrow[0,\infty)$ be
a nondecreasing function such that, for some constants $\gamma>0,$
$T>0 $ and all $t\geqslant T$, the inequality
\begin{equation}\label{eq1B}
\varphi(2t)\leqslant \gamma\cdot\varphi(t)
\end{equation}
is fulfilled. We will call such functions {\it functions that
satisfy the doubling condition}.

Let $\varphi\colon [a,\infty)\rightarrow[0,\infty)$ be a function
with the doubling condition, then the function
$\widetilde{\varphi}(t):=\varphi(1/t)$ does not increase and is
defined on a half-interval $(0, 1/a].$ The following statement is
proved in~\cite[Lemma~3.1]{RSS}.

\medskip
\begin{proposition}\label{pr1}
{\sl Let $a>0,$ let $\varphi\colon [a,\infty)\rightarrow[0,\infty)$
be a nondecreasing function with a doubling condition~(\ref{eq1B}),
let $x_0\in {\Bbb R}^n,$ $n\geqslant 2,$ and let $Q:{\Bbb
R}^n\rightarrow [0, \infty]$ be a Lebesgue measurable function for
which there exists $0<C<\infty$ such that
\begin{equation}\label{eq1AA}
\limsup\limits_{\varepsilon\rightarrow
0}\frac{\varphi(1/\varepsilon)}{\Omega_n\cdot\varepsilon^n
}\int\limits_{B(x_0, \varepsilon)}Q(x)\,dm(x)\leqslant C\,.
\end{equation}
Then there exists $\varepsilon^{\,\prime}_0>0$ such that
\begin{equation}\label{eq2B}
\int\limits_{\varepsilon<|x-x_0|<\varepsilon^{\,\prime}_0}
\frac{\varphi(1/|x-x_0|)Q(x)\,dm(x)}{|x-x_0|^n}\leqslant
C_1\cdot\left(\log\frac{1}{\varepsilon}\right)\,,\qquad\varepsilon\rightarrow
0\,,
\end{equation}
where $C_1:=\frac{\gamma C\Omega_n2^n}{\log 2}.$
}
\end{proposition}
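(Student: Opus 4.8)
The plan is to reduce the integral over the spherical shell $\{\varepsilon < |x-x_0| < \varepsilon^{\,\prime}_0\}$ to a dyadic sum of integrals over thin annuli $A(x_0, 2^{-k-1}, 2^{-k})$, estimate each such integral using the hypothesis~(\ref{eq1AA}), and then sum the resulting geometric-type series, whose length is of order $\log(1/\varepsilon)$. First I would fix $\varepsilon^{\,\prime}_0>0$ small enough that the $\limsup$ condition~(\ref{eq1AA}) is realized as an honest bound: there is $\varepsilon^{\,\prime}_0$ such that $\varphi(1/\varepsilon)\int_{B(x_0,\varepsilon)} Q\,dm \leqslant C\,\Omega_n\,\varepsilon^n\cdot(1+o(1))$, and after possibly shrinking $\varepsilon^{\,\prime}_0$ and enlarging the constant slightly I may assume $\int_{B(x_0,t)} Q\,dm \leqslant \dfrac{C\,\Omega_n\,t^n}{\varphi(1/t)}$ for all $0<t\leqslant\varepsilon^{\,\prime}_0$. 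I should also, if necessary, shrink $\varepsilon^{\,\prime}_0$ below $1/T$ and below $1/a$ so that the doubling inequality~(\ref{eq1B}) applies to the relevant arguments of $\varphi$.

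Next, on the annulus $A_k:=A(x_0, 2^{-k-1}, 2^{-k})$ (for integers $k$ with $2^{-k}\leqslant \varepsilon^{\,\prime}_0$), the weight $\dfrac{\varphi(1/|x-x_0|)}{|x-x_0|^n}$ is controlled: since $|x-x_0|\geqslant 2^{-k-1}$ we have $|x-x_0|^{-n}\leqslant 2^{(k+1)n}$, and since $\varphi$ is nondecreasing and $1/|x-x_0|\leqslant 2^{k+1}$ we get $\varphi(1/|x-x_0|)\leqslant \varphi(2^{k+1})\leqslant \gamma\,\varphi(2^k)$ by the doubling condition. Hence
\begin{equation*}
\int\limits_{A_k}\frac{\varphi(1/|x-x_0|)\,Q(x)\,dm(x)}{|x-x_0|^n}
\leqslant \gamma\,\varphi(2^k)\,2^{(k+1)n}\int\limits_{A_k} Q(x)\,dm(x)
\leqslant \gamma\,\varphi(2^k)\,2^{(k+1)n}\int\limits_{B(x_0,2^{-k})} Q(x)\,dm(x).
\end{equation*}
Applying the bound on $\int_{B(x_0,2^{-k})} Q\,dm$ with $t=2^{-k}$ (so $\varphi(1/t)=\varphi(2^k)$), the factors $\varphi(2^k)$ cancel and one is left with
\begin{equation*}
\int\limits_{A_k}\frac{\varphi(1/|x-x_0|)\,Q(x)\,dm(x)}{|x-x_0|^n}
\leqslant \gamma\,\varphi(2^k)\,2^{(k+1)n}\cdot\frac{C\,\Omega_n\,2^{-kn}}{\varphi(2^k)}
= \gamma\,C\,\Omega_n\,2^n =: c,
\end{equation*}
a bound independent of $k$.

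Finally I would cover the shell $\{\varepsilon<|x-x_0|<\varepsilon^{\,\prime}_0\}$ by the annuli $A_k$ for $k$ ranging over those integers with $2^{-k}\leqslant\varepsilon^{\,\prime}_0$ and $2^{-k-1}\geqslant\varepsilon$ (together with one extra boundary annulus if $\varepsilon^{\,\prime}_0$ is not a dyadic number, which only changes the constant). The number of such $k$ is at most $\log_2(1/\varepsilon) + O(1) = \dfrac{1}{\log 2}\log(1/\varepsilon) + O(1)$, so summing the per-annulus bound $c=\gamma C\Omega_n 2^n$ gives
\begin{equation*}
\int\limits_{\varepsilon<|x-x_0|<\varepsilon^{\,\prime}_0}\frac{\varphi(1/|x-x_0|)\,Q(x)\,dm(x)}{|x-x_0|^n}
\leqslant \frac{\gamma\,C\,\Omega_n\,2^n}{\log 2}\,\log\frac{1}{\varepsilon}\,(1+o(1))
\leqslant C_1\log\frac{1}{\varepsilon}
\end{equation*}
as $\varepsilon\to 0$, with $C_1=\dfrac{\gamma C\Omega_n 2^n}{\log 2}$ exactly as claimed. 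The one genuinely delicate point is the very first step: turning the $\limsup$ in~(\ref{eq1AA}) into a uniform bound $\int_{B(x_0,t)}Q\,dm\leqslant C\Omega_n t^n/\varphi(1/t)$ valid for \emph{all} small $t$ rather than just along a sequence; this is where the choice of $\varepsilon^{\,\prime}_0$ matters and where one must absorb the $(1+o(1))$ factor — everything afterward is the routine dyadic decomposition and telescoping cancellation of $\varphi(2^k)$ sketched above. (The $o(1)$ factor is harmless since we only claim the inequality in the limit $\varepsilon\to 0$; alternatively one fixes $\varepsilon^{\,\prime}_0$ with the bound $2C$ in place of $C$ and carries the extra factor $2$ into a slightly larger constant, which is why \cite{RSS} may state it with the clean constant $C_1$.)
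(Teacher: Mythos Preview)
Your dyadic decomposition argument is correct and is the standard route to this type of estimate; the cancellation of $\varphi(2^k)$ against $\varphi(1/t)$ in the mass bound is exactly how the doubling hypothesis~(\ref{eq1B}) is meant to be spent, and the bookkeeping that produces the constant $C_1=\gamma C\Omega_n 2^n/\log 2$ is accurate. Note, however, that the present paper does not actually prove Proposition~\ref{pr1}: it simply quotes the statement from \cite[Lemma~3.1]{RSS}, so there is no in-paper proof to compare your attempt against. Your sketch is almost certainly the argument given in \cite{RSS}, since the dyadic-shell estimate is essentially the only mechanism that turns the pair (averaged mass bound~(\ref{eq1AA}), doubling~(\ref{eq1B})) into the weighted integral bound~(\ref{eq2B}) with precisely that constant. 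Your closing remark about absorbing the $(1+o(1))$ factor into the asymptotic reading of ``$\varepsilon\to 0$'' is also the right way to reconcile the $\limsup$ hypothesis with the exact constant in the conclusion.
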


A domain $R$ in $\overline{{\Bbb R}^n},$ $n\geqslant 2,$ is called
{\it a ring,} if $\overline{{\Bbb R}^n}\setminus R$ consists of
exactly two components $E$ and $F.$ In this case, we write: $R=R(E,
F).$ The following statement is true, see \cite[ratio~(7.29)]{MRSY}.

\medskip
\begin{proposition}\label{pr3}{\sl\, If $R=R(E, F)$ is a ring, then
$$M(\Gamma(E, F, \overline{{\Bbb R}^n}))\geqslant\frac{\omega_{n-1}}{\left(
\log\frac{2\lambda^2_n}{h(E)h(F)}\right)^{n-1}}\,,$$
where $\lambda_n \in[4,2e^{n-1}),$ $\lambda_2=4$ and
$\lambda_n^{1/n} \rightarrow e$ as $n\rightarrow \infty,$ and $h(E)$
denotes the chordal diameter of the set $E$ defined in~(\ref{eq5}).
}
\end{proposition}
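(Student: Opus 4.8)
The plan is to deduce the inequality from the classical Teichm\"uller bound for the capacity of a ring, the M\"obius invariance of the conformal modulus and of the (absolute) cross ratio serving to pass between the two.

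Recall that $M(\Gamma(E,F,\overline{{\Bbb R}^n}))$ coincides with the conformal capacity $\mathrm{cap}\,R$ of the ring $R=R(E,F),$ hence is invariant under M\"obius transformations of $\overline{{\Bbb R}^n}.$ If one of $E,$ $F$ is a single point, then $h(E)h(F)=0$ and both sides of the asserted inequality vanish, so we may assume that $E$ and $F$ are nondegenerate continua. Since $\overline{{\Bbb R}^n}$ is compact and $h$ is continuous, we may pick $x_1,x_2\in E$ with $h(x_1,x_2)=h(E)$ and $y_1,y_2\in F$ with $h(y_1,y_2)=h(F)$ (so $x_1\ne x_2$ and $y_1\ne y_2$). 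Choose a M\"obius transformation $\varphi$ with $\varphi(x_1)=0$ and $\varphi(y_1)=\infty,$ and put $z_1:=\varphi(x_2),$ $z_2:=\varphi(y_2);$ then $\varphi(E)$ is a bounded continuum containing $0$ and $z_1\ne0,$ while $\varphi(F)$ is a continuum containing $z_2\notin\{0,\infty\}$ and $\infty.$

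Since $0,z_1\in\varphi(E)$ and $z_2,\infty\in\varphi(F),$ Teichm\"uller's modulus theorem in dimension $n$ (see, e.g.,~\cite{Va}; cf.~\cite{MRSY}) gives
$$M(\Gamma(E,F,\overline{{\Bbb R}^n}))=\mathrm{cap}\,\varphi(R)\;\geqslant\;\tau_n\!\left(\frac{|z_2|}{|z_1|}\right)\,,$$
where $\tau_n$ denotes the (strictly decreasing, positive) Teichm\"uller capacity function. To estimate the argument, note that on ${\Bbb R}^n$ the chordal metric has the form $h(x,y)=f(x)f(y)|x-y|$ with $f(x)=(1+|x|^2)^{-1/2};$ consequently (passing to the limit at $\infty$) the absolute cross ratio of four points in $\overline{{\Bbb R}^n}$ computed with $h$ equals the one computed with the Euclidean metric, and is therefore a M\"obius invariant. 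Applying this to the quadruple $x_1,y_1,y_2,x_2$ and its $\varphi$-image $0,\infty,z_2,z_1$ yields
$$\frac{|z_2|}{|z_1|}=\frac{h(x_1,y_2)\,h(x_2,y_1)}{h(x_1,x_2)\,h(y_1,y_2)}=\frac{h(x_1,y_2)\,h(x_2,y_1)}{h(E)\,h(F)}\;\leqslant\;\frac{1}{h(E)\,h(F)}\,,$$
since $h\leqslant1$ on $\overline{{\Bbb R}^n}\times\overline{{\Bbb R}^n}.$

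It remains to invoke the standard lower bound $\tau_n(t)\geqslant\omega_{n-1}\big(\log\lambda_n^2(1+t)\big)^{1-n}$ ($t>0$), which comes from $\tau_n(t)=2^{1-n}\gamma_n\big((1+t)^{-1/2}\big)$ together with the Gr\"otzsch-ring estimate $\gamma_n(s)\geqslant\omega_{n-1}\big(\log(\lambda_n/s)\big)^{1-n}$ (the latter being essentially the definition of the Gr\"otzsch constant $\lambda_n;$ see~\cite{MRSY}). Applying this bound at $t=|z_2|/|z_1|,$ using $|z_2|/|z_1|\leqslant1/(h(E)h(F))$ and $h(E)h(F)\leqslant1$ — so that $\lambda_n^2\big(1+|z_2|/|z_1|\big)\leqslant\lambda_n^2\big(1+1/(h(E)h(F))\big)\leqslant2\lambda_n^2/(h(E)h(F))$ — and the monotonicity of $u\mapsto(\log u)^{1-n}$ on $(1,\infty),$ we obtain
$$M(\Gamma(E,F,\overline{{\Bbb R}^n}))\;\geqslant\;\tau_n\!\left(\frac{|z_2|}{|z_1|}\right)\;\geqslant\;\omega_{n-1}\!\left(\log\lambda_n^2\Big(1+\frac{|z_2|}{|z_1|}\Big)\right)^{1-n}\;\geqslant\;\frac{\omega_{n-1}}{\left(\log\dfrac{2\lambda_n^2}{h(E)h(F)}\right)^{n-1}}\,,$$
which is the assertion. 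The geometric input — Teichm\"uller's theorem and the cross-ratio identity — is classical; the only point that genuinely needs care is the bookkeeping of universal constants along the chain $\tau_n\to\gamma_n\to\lambda_n$ so that the Gr\"otzsch constant appears exactly in the form $2\lambda_n^2$ (the slack in the last two inequalities is comfortable, so in fact any reasonable form of the $\tau_n$-estimate suffices).
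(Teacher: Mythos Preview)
Your derivation is correct. The paper does not actually prove this proposition: it is simply quoted as relation~(7.29) from~\cite{MRSY}, with no argument given. What you have written is essentially the standard route to that inequality --- reducing to the Teichm\"uller ring via a M\"obius normalization, computing the relevant absolute cross ratio in the chordal metric, and then feeding in the Gr\"otzsch--Teichm\"uller bound $\tau_n(t)\geqslant\omega_{n-1}\big(\log\lambda_n^2(1+t)\big)^{1-n}$ --- so you are supplying precisely the proof that~\cite{MRSY} contains and the present paper omits.

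One small remark on presentation: the Teichm\"uller comparison $\mathrm{cap}\,R\geqslant\tau_n(|z_2|/|z_1|)$ is often stated in the literature under the side condition $|z_1|\leqslant|z_2|$, though it in fact holds for all $t>0$. Since your chain of inequalities only uses the monotonicity of $\tau_n$ and of $u\mapsto(\log u)^{1-n}$, and since the final bound is applied at $t\leqslant 1/(h(E)h(F))$, this causes no trouble; but it would be cleaner to cite a source (e.g.\ Vuorinen's monograph or~\cite{MRSY}) where the Teichm\"uller inequality and the $\tau_n$--$\gamma_n$--$\lambda_n$ bookkeeping are recorded in exactly the form you use.
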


\medskip
Let $D\subset {\Bbb R}^n,$ $f:D\rightarrow {\Bbb R}^n$ be an open
discrete mapping, let $\beta: [a,\,b)\rightarrow {\Bbb R}^n$ be a
path and let $x\in\,f^{\,-1}(\beta(a)).$ A path $\alpha:
[a,\,c)\rightarrow D$ is called a {\it maximal $f$-lifting} of
$\beta$ with the origin at the point $x,$ if $(1)\quad
\alpha(a)=x\,;$ $(2)\quad f\circ\alpha=\beta|_{[a,\,c)};$ $(3)$\quad
for every $c<c^{\prime}\leqslant b$, there is no a path
$\alpha^{\prime}: [a,\,c^{\prime})\rightarrow D$ such that
$\alpha=\alpha^{\prime}|_{[a,\,c)}$ and $f\circ
\alpha^{\,\prime}=\beta|_{[a,\,c^ {\prime})}.$ The following
statement is true, see~\cite[lemma~3.12]{MRV$_2$},
cf.~\cite[lemma~3.7]{Vu}.

\medskip
\begin{proposition}\label{pr2}
{\sl Let $f:D\rightarrow {\Bbb R}^n,$ $n\geqslant 2,$ be open
discrete mapping, let $x_0\in D,$ and let $\beta:[a,\,b)\rightarrow
{\Bbb R}^n$ be a path such that $\beta(a)=f(x_0 )$ and either
$\lim\limits_{t\rightarrow b}\beta(t)$ exists, or
$\beta(t)\rightarrow \partial f(D)$ as $t\rightarrow b.$ Then
$\beta$ has a maximal $f$-lifting of $\alpha: [a,\,c)\rightarrow D $
starting at the point $x_0.$ If $\alpha(t)\rightarrow x_1\in D$ as
$t\rightarrow c,$ then $c=b$ and $f(x_1)=\lim\limits_{t \rightarrow
b}\beta(t).$ Otherwise, $\alpha(t)\rightarrow \partial D$ as
$t\rightarrow c.$}
\end{proposition}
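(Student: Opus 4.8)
The plan is to prove the statement in two stages: first to construct a maximal $f$-lifting of $\beta$ by a purely order-theoretic argument, and then to analyse its behaviour at the terminal parameter $c$ using the local structure theory of open discrete mappings (normal neighbourhoods and local path lifting). The result is classical, going back to~\cite{MRV$_2$}; what follows is a sketch of a self-contained proof.

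\emph{Existence of a maximal lift.} First I would let $\mathcal{L}$ be the set of all $f$-liftings $\alpha'\colon[a,c')\to D$ of $\beta$ with $\alpha'(a)=x_0$, partially ordered by extension. This set is nonempty: since $f$ is open and discrete and $x_0\in D$, the basic local lifting lemma for open discrete maps provides a lift of $\beta$ over a short initial interval into a small normal neighbourhood of $x_0$. Every chain in $\mathcal{L}$ has an upper bound, namely the map defined on the union of the domains of the members of the chain (this common extension is continuous, takes its values in $D$, and composes with $f$ to give $\beta$). Hence Zorn's lemma yields a maximal element $\alpha\colon[a,c)\to D$ of $\mathcal{L}$, which by definition is a maximal $f$-lifting of $\beta$ starting at $x_0$. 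Note that this stage uses nothing about $\beta$ beyond its being a path.

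\emph{The dichotomy.} Suppose that $\alpha(t)$ does not converge to $\partial D$ as $t\to c$; I would then show that $\alpha$ has a limit $x_1\in D$ at $c$, with $c=b$ and $f(x_1)=\lim_{t\to b}\beta(t)$. Pick $t_k\uparrow c$ with $\alpha(t_k)\to x_1\in D$. Since $f(D)$ is open and $\beta(t_k)=f(\alpha(t_k))\to f(x_1)\in f(D)$, the alternative $\beta(t)\to\partial f(D)$ is excluded, and in the remaining case $\beta(t)\to y_1:=f(x_1)$ as $t\to c$ (by continuity of $\beta$ at $c$ if $c<b$, and by the hypothesis on $\beta$ if $c=b$). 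Next I would fix a normal neighbourhood $U$ of $x_1$ relative to $y_1$ — a neighbourhood with $\overline{U}\subset D$, $\overline{U}\cap f^{-1}(y_1)=\{x_1\}$ and $f(\partial U)\cap V=\varnothing$ for some ball $V=B(y_1,\rho)$ — and observe that, for $t$ close enough to $c$, $\beta(t)\in V$, so once a tail of $\alpha$ enters $U$ it can never escape through $\partial U$ (its $f$-image would otherwise land in $f(\partial U)\cap V=\varnothing$), hence stays inside the compact set $\overline{U}$. The cluster set of $\alpha$ at $c$ is then a nonempty subset of $\overline{U}$ which $f$ maps into $\{y_1\}$, hence is contained in $\overline{U}\cap f^{-1}(y_1)=\{x_1\}$, so $\lim_{t\to c}\alpha(t)=x_1\in D$. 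Finally, if $c<b$, this limit lets me extend $\alpha$ past $c$ — set $\alpha(c)=x_1$ and apply local lifting to $\beta|_{[c,b)}$ at the interior point $x_1$ — contradicting the maximality of $\alpha$; hence $c=b$ and $f(x_1)=\lim_{t\to b}\beta(t)$. If instead $\alpha(t)\to\partial D$, we are in the second alternative, so the two alternatives of the statement are exhaustive.

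\emph{Expected main obstacle.} The only genuinely nontrivial input is the local structure theory of open discrete mappings used above: the existence of arbitrarily small normal neighbourhoods $U$ with the stated properness and boundary behaviour, and the local path-lifting lemma (used both to get the initial lift started and to continue the lift past an interior limit point). These facts are standard but not immediate; once they are in hand, the remainder is the topological bookkeeping indicated above.
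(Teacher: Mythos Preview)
The paper does not prove this proposition at all; it merely cites it from \cite[Lemma~3.12]{MRV$_2$} (cf.\ \cite[Lemma~3.7]{Vu}) and uses it as a black box in the proof of Theorem~\ref{th4}. Your sketch is a correct outline of the classical argument --- Zorn's lemma for existence of a maximal lift, followed by a normal-neighbourhood trap to show that any interior cluster point of $\alpha$ at $c$ is in fact a limit, forcing $c=b$ by maximality --- and thus provides considerably more than the paper does. The one place to be careful is the sentence ``once a tail of $\alpha$ enters $U$ it can never escape through $\partial U$'': you need $\beta(t)\in V$ for \emph{all} $t$ in that tail, not just along the subsequence $t_k$, but you have this since you already established $\beta(t)\to y_1$ as $t\to c$; it is worth making that quantifier explicit.
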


\section{Proof of Theorem~\ref{th3}}

Let
$$\varepsilon_0=\min\{\varepsilon^{\,\prime}_0(x_0),
r_0^{\,\prime}, {\rm dist}\,(x_0, A), 1\}\,,$$
where $\varepsilon^{\,\prime}_0>0$ is a number from
Proposition~\ref{pr1} with $\varphi\equiv 1,$ besides that
$r_0^{\,\prime}$ is a number from conditions of the theorem, and $A$
is a continuum from the definition of $\frak{F}^{A_0, R_0}_{Q, A,
\delta}(D, x_0).$ Let $x, y\in B(x_0, \varepsilon^2_0)$ and let
$f\in \frak{F}^{A_0, R_0}_{Q, A, \delta}(D, x_0).$ Now, $x, y\in
B(x_0, \varepsilon_0).$ Without loss of a generalization, we may
assume that $|x-x_0|\geqslant|y-x_0|.$
By the definition of $\varepsilon_0,$
\begin{equation}\label{eq18}
A\subset D\setminus B(x_0, \varepsilon_0)\,.
\end{equation}
Since the points $S(x_0, r)\cap D,$ $0<r<r^{\,\prime}_0,$ are
accessible from the domain $D$ by means of some path $\gamma,$ and
the set $B(x_0, r)\cap D$ is connected for all $0<r<r^{\,\prime}_0,$
we may join points $x$ and $y$ by a path $K,$ which completely
belongs to the ball $\overline{B(x_0, |x-x_0|)}$ and belongs to $D.$
We may consider that, $K$ is a Jordan path. Let $z, w\in f(A)\subset
D^{\,\prime}_{f}$ and $u, v\in A$ be such that
\begin{equation}\label{eq8}
{\rm diam}\,f(A)=|z-w|=|f(u)-f(v)|\geqslant \delta\,.
\end{equation}
Since the continuum $A$ is path connected, one can join the points
$u, v$ by a path $K^{\,\prime}$ inside $A.$ Due to~(\ref{eq8}), we
obtain that
\begin{equation}\label{eq9}
{\rm diam}\,f(|K^{\,\prime}|)\geqslant |f(u)-f(v)|\geqslant
\delta\,.
\end{equation}
We also may consider that $K^{\,\prime}$ is Jordan.

Note that, $f(K)$ and $f(K^{\,\prime})$ are also Jordan paths, and
they do not split ${\Bbb R}^n.$ Indeed, for $n\geqslant 3$ the set
$f(|K|)\cup f(|K^{\,\prime}|)$ has a topological dimension~1 as the
union of two closed sets of topological dimension~1
(see~\cite[Theorem~III 2.3]{HW}). Then $f(|K|)\cup
f(|K^{\,\prime}|)$ does not split ${\Bbb R}^n$
(see~\cite[Corollary~1.5.IV]{HW}). Now, let $n=2.$ According to
Antoine's theorem on the absence of wild arcs
(see~\cite[Theorem~II.4.3]{Keld}), there exists a homeomorphism
$\varphi:{\Bbb R}^2\rightarrow {\Bbb R}^2,$ which maps $f(|K|)$ onto
some segment $I.$ It follows that, any points $x,y\in {\Bbb
R}^2\setminus f(|K|)$ may be joined by a path $\gamma$ in ${\Bbb
R}^2\setminus f(|K|).$ Reasoning similarly, it can be shown that,
any points $x,y\in {\Bbb R}^2\setminus (f(|K|)\bigcup
f(|K^{\,\prime}|))$ may be joined by a path $\gamma$ in ${\Bbb
R}^2\setminus (f(|K|)\bigcup f(|K^{\,\prime}|)).$

Therefore, $R=R(f(|K|), f(|K^{\,\prime}|))$ is a ring domain. In
this case, let us to set $\Gamma=\Gamma(f(|K|)), f(|K^{\,\prime}|),
{\Bbb R}^n).$ Then, by Proposition~\ref{pr3}
\begin{equation}\label{eq10}
M(\Gamma(f(|K|), f(|K^{\,\prime}|), \overline{{\Bbb
R}^n}))\geqslant\frac{\omega_{n-1}}{\left(
\log\frac{2\lambda^2_n}{h(f(|K|))h(f(|K^{\,\prime}|))}\right)^{n-1}}\,.
\end{equation}
By~(\ref{eq9}) and~(\ref{eq10}), and by the definition of~$K,$ we
obtain that
\begin{equation}\label{eq15}
M(\Gamma(f(|K|), f(|K^{\,\prime}|), \overline{{\Bbb
R}^n}))\geqslant\frac{\omega_{n-1}}{\left(
\log\frac{2\lambda^2_n}{\delta\cdot h(f(x), f(y))}\right)^{n-1}}\,.
\end{equation}
Since $D^{\,\prime}_{f}$ is a $QED$-domain with a constant $A_0$
in~(\ref{eq4***}), by the condition~(\ref{eq15}) we obtain that
\begin{equation}\label{eq16}
M(\Gamma(f(|K|), f(|K^{\,\prime}|),
D^{\,\prime}_{f}))\geqslant\frac{\omega_{n-1}}{A_0\left(
\log\frac{2\lambda^2_n}{\delta\cdot h(f(x), f(y))}\right)^{n-1}}\,.
\end{equation}
Note that
\begin{equation}\label{eq17}
\Gamma(|K|, |K^{\,\prime}|, D)>\Gamma(S(x_0, |x-x_0|), S(x_0,
\varepsilon_0), D)\,.
\end{equation}
Indeed, let $\gamma\in \Gamma(|K|, |K^{\,\prime}|, D),$ $\gamma:[0,
1]\rightarrow D,$ $\gamma(0)\in |K|$ and $\gamma(1)\in
|K^{\,\prime}|.$ Since $ |K|\subset B(x_0, |x-x_0|),$ and
$|K^{\,\prime}|\subset A,$ due to~(\ref{eq18}), we obtain that
$A\subset D\setminus B(x_0, |x-x_0|).$ Then $\gamma\cap B (x_0,
|x-x_0|)\ne\varnothing\ne \gamma\cap (D\setminus B(x_0, |x-x_0|)).$
By~\cite[Theorem~1.I.5.46]{Ku} there exists $t_1\in (0, 1)$ such
that $\gamma(t_1)\in S(x_0, |x-x_0|).$ Consider a path
$\gamma_1:=\gamma|_{[t_1, 1]}.$ Since $|K|\subset B(x_0,
\varepsilon_0)$ and $|K^{\,\prime}|\subset A,$ due to~(\ref{eq18}),
$A\subset D\setminus B(x_0, \varepsilon_0).$ Then $\gamma_1\cap
B(x_0, \varepsilon_0)\ne\varnothing\ne \gamma\cap (D\setminus B(x_0,
\varepsilon_0)).$ By~\cite[Theorem~1.I.5.46]{Ku}, there exists
$t_2\in (0, t_1)$ such that $\gamma_1(t_1)=\gamma(t_1)\in S (x_0,
\varepsilon_0).$ Put $\gamma_2:=\gamma|_{[t_1, t_2]}.$ Then
$\gamma_2$ is a subpath of $\gamma$ and $\gamma_2\in\Gamma(S(x_0
,|x-x_0|), S(x_0, \varepsilon_0), D).$ This proves~(\ref{eq17}). In
this case, by the minorization of a modulus of families of paths
(see, e.g., \cite[Theorem~6.4]{Va}), by~(\ref{eq17})
and~(\ref{eq3*!!}) we obtain that
$$M(\Gamma(f(|K|), f(|K^{\,\prime}|), D^{\,\prime}_{f}))=
M(f(\Gamma(|K|, |K^{\,\prime}|, D)))\leqslant $$
\begin{equation}\label{eq20}
\leqslant M(f(\Gamma(S(x_0, |x-x_0|), S(x_0, \varepsilon_0),
D)))\leqslant\int\limits_{A} Q(x)\cdot \eta^n(|x-x_0|)\, dm(x)\,,
\end{equation}
where $\eta$ is an arbitrary Lebesgue measurable function satisfying
the condition~(\ref{eq28*}) for $r_1=|x-x_0|,$ $r_2=\varepsilon_0.$
Set
$$\eta(t):=\begin{cases}\frac{1}
{\left(\log\frac{\varepsilon_0}{|x-x_0|}\right)
t}\,,& t\in(|x-x_0|, \varepsilon_0)\,, \\
0\,,& t\not\in(|x-x_0|, \varepsilon_0)\,.
\end{cases}$$
Observe that, the function $\eta$ satisfies the
condition~(\ref{eq28*}) for $r_1=|x-x_0|$ and $r_2=\varepsilon_0.$
Now, by~(\ref{eq20}) we obtain that
\begin{equation}\label{eq21}
M(\Gamma(f(|K|), f(|K^{\,\prime}|), D^{\,\prime}_{f}))\leqslant
\frac{1}{\log^n\frac{\varepsilon_0}{|x-x_0|}}\int\limits_{A(x_0,
|x-x_0|, \varepsilon_0)} \frac{Q(x)}{{|x-x_0|}^n} \,dm(x)\,.
\end{equation}
Since $|x-x_0|<\varepsilon^2_0,$
\begin{equation}\label{eq3}
\log\frac{1}{|x-x_0|}<2\log\frac{\varepsilon_0}{|x-x_0|}\,.
\end{equation}
By the choice of $\varepsilon_0,$ by Proposition~\ref{pr1} and
by~(\ref{eq21}) we obtain that
\begin{equation}\label{eq22}
M(\Gamma(f(|K|), f(|K^{\,\prime}|), D^{\,\prime}_{f}))\leqslant
\frac{2}{\log^{n-1}\frac{\varepsilon_0}{|x-x_0|}}\frac{C\Omega_n2^n}{\log
2}\,.
\end{equation}
Combining~(\ref{eq16}) and~(\ref{eq22}), we obtain that
\begin{equation}\label{eq23}
\frac{\omega_{n-1}}{A_0\left( \log\frac{2\lambda^2_n}{\delta\cdot
h(f(x), f(y))}\right)^{n-1}}\leqslant
\frac{2}{\log^{n-1}\frac{\varepsilon_0}{|x-x_0|}}\frac{C\Omega_n2^n}{\log
2}\,.
\end{equation}
Since $\omega_{n-1}=n\Omega_n,$ the latter relation may be rewritten
as
\begin{equation}\label{eq24}
\frac{1}{\log^{n-1}\frac{2\lambda^2_n}{\delta\cdot h(f(x),
f(y))}}\leqslant
\frac{1}{\log^{n-1}\frac{\varepsilon_0}{|x-x_0|}}\frac{A_0C2^{n+1}}{n\log
2}\,.
\end{equation}
By~(\ref{eq24}) we obtain that
$$
\log\frac{2\lambda^2_n}{\delta\cdot h(f(x), f(y))}\geqslant
\log\frac{\varepsilon_0}{|x-x_0|}\cdot \left(\frac{n\log
2}{A_0C2^{n+1}}\right)^{\frac{1}{n-1}}\,,
$$
$$
\frac{2\lambda^2_n}{\delta\cdot h(f(x), f(y))}\geqslant
\left(\frac{\varepsilon_0}{|x-x_0|}\right)^{ \left(\frac{n\log
2}{A_0C2^{n+1}}\right)^{\frac{1}{n-1}}}\,,
$$
\begin{equation}\label{eq25}
h(f(x), f(y))\leqslant \frac{2\lambda^2_n}{\delta
{\varepsilon_0}^{\left(\frac{n\log
2}{A_0C2^{n+1}}\right)^{\frac{1}{n-1}}}}|x-x_0|^{ \left(\frac{n\log
2}{A_0C2^{n+1}}\right)^{\frac{1}{n-1}}}\,.\end{equation}
By the definition of the chordal distance in~(\ref{eq3C}) and due to
the condition~$f(D)=D^{\,\prime}_{f}\subset B(0, R_0),$ we obtain
that
$$h(f(x),f(y))\geqslant \frac{|f(x)-f(y)|}{1+R^2_0}\,.$$
Then, by~(\ref{eq25}), we have that
\begin{equation}\label{eq26}
|f(x)-f(y)|\leqslant \frac{2(1+R^2_0)\lambda^2_n}{\delta
{\varepsilon_0}^{\left(\frac{n\log
2}{A_0C2^{n+1}}\right)^{\frac{1}{n-1}}}}|x-x_0|^{ \left(\frac{n\log
2}{A_0C2^{n+1}}\right)^{\frac{1}{n-1}}}\,,\end{equation}
or
\begin{equation}\label{eq27}
|f(x)-f(y)|\leqslant \frac{2(1+R^2_0)\lambda^2_n}{\delta
{\varepsilon_0}^{\beta_n}}|x-x_0|^{\beta_n}\,,
\end{equation}
because~$\beta_n=\left(\frac{n\log
2}{A_0C2^{n+1}}\right)^{\frac{1}{n-1}}.$ We set now
$\widetilde{\varepsilon_0}:=\varepsilon^2_0.$ The
relation~(\ref{eq27}) completes the proof.~$\Box$

\medskip
\begin{corollary}\label{cor1}
{\sl\, Under the conditions of Theorem~\ref{th3}, any
$f\in\frak{F}^{A_0, R_0}_{Q, A, \delta}(D, x_0)$ has a continuous
extension at~$x_0$ and, in addition,
\begin{equation}\label{eq24B}
 |f(x)-f(x_0)|\leqslant \alpha\cdot|x-x_0|^{\beta_n}\,.
\end{equation}
  }
\end{corollary}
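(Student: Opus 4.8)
The plan is to derive Corollary~\ref{cor1} directly from the H\"older estimate~(\ref{eq2.4.3}) of Theorem~\ref{th3} by a limiting argument. First I would observe that the inequality~(\ref{eq27}) established in the proof of Theorem~\ref{th3} holds with the explicit constant $\alpha=\frac{2(1+R^2_0)\lambda^2_n}{\delta\varepsilon_0^{\,\beta_n}}$ and is valid for \emph{all} pairs $x,y\in B(x_0,\widetilde{\varepsilon_0})\cap D$, uniformly over $f\in\frak{F}^{A_0,R_0}_{Q,A,\delta}(D,x_0)$. In particular, for a fixed $f$, the right-hand side of~(\ref{eq2.4.3}) tends to $0$ as $x,y\to x_0$ (since $\beta_n>0$), so $f$ is uniformly continuous near $x_0$ relative to $D$; hence, since $x_0\in\partial D$ is an accumulation point of $D$ (indeed $B(x_0,r)\cap D$ is connected and nonempty for small $r$ by condition~1)), the Cauchy criterion gives that $\lim_{x\to x_0,\ x\in D} f(x)$ exists. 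Defining $f(x_0)$ to be this limit extends $f$ continuously to $x_0$.

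Second, I would pass to the limit in the estimate. Fix $x\in B(x_0,\widetilde{\varepsilon_0})\cap D$ and apply~(\ref{eq27})/(\ref{eq2.4.3}) with the pair $x,y$ where $y\in D$ ranges over a sequence $y_k\to x_0$; in that case $|y_k-x_0|^{\beta_n}\to 0$, so for all large $k$ we have $|y_k-x_0|\le|x-x_0|$ and thus $\max\{|x-x_0|^{\beta_n},|y_k-x_0|^{\beta_n}\}=|x-x_0|^{\beta_n}$. Letting $k\to\infty$ and using the just-established continuity $f(y_k)\to f(x_0)$, the left-hand side $|f(x)-f(y_k)|\to|f(x)-f(x_0)|$, which yields~(\ref{eq24B}) with the same $\alpha$ and $\beta_n$. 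This completes the argument.

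There is essentially no serious obstacle here; the only points requiring a word of care are: (a) checking that $x_0$ is genuinely a limit point of $D$ so that the continuous extension is well-defined and non-vacuous — this is immediate from condition~1) of Theorem~\ref{th3}; and (b) making sure that in the limiting pair one can always arrange $|y_k-x_0|\le|x-x_0|$, which is automatic once $k$ is large. One could alternatively phrase the extension step via the general principle that a map which is H\"older (hence uniformly continuous) on $B(x_0,\widetilde{\varepsilon_0})\cap D$ extends continuously to the closure, but the direct Cauchy-sequence argument above is self-contained and uses only~(\ref{eq2.4.3}).

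\begin{proof}
By~(\ref{eq27}), for every $f\in\frak{F}^{A_0, R_0}_{Q, A, \delta}(D, x_0)$ and all $x, y\in B(x_0, \widetilde{\varepsilon_0})\cap D$ we have
$$|f(x)-f(y)|\leqslant \alpha\cdot\max\{|x-x_0|^{\beta_n}, |y-x_0|^{\beta_n}\}\,,\qquad \alpha=\frac{2(1+R^2_0)\lambda^2_n}{\delta\,{\varepsilon_0}^{\beta_n}}\,.$$
By condition~1) of Theorem~\ref{th3}, the set $B(x_0, r)\cap D$ is nonempty and connected for all $0<r<r^{\,\prime}_0$, so $x_0$ is a limit point of $D$. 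Since $\beta_n>0$, the right-hand side above tends to $0$ as $x, y\to x_0$; hence $f$ satisfies the Cauchy criterion along $D$ at $x_0$, and $f(x_0):=\lim\limits_{x\to x_0,\,x\in D}f(x)$ exists. With this value, $f$ is continuous at $x_0$.

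Now fix $x\in B(x_0, \widetilde{\varepsilon_0})\cap D$ and choose a sequence $y_k\in D$ with $y_k\to x_0$. For all sufficiently large $k$ we have $|y_k-x_0|\leqslant |x-x_0|$, so $\max\{|x-x_0|^{\beta_n}, |y_k-x_0|^{\beta_n}\}=|x-x_0|^{\beta_n}$, and therefore
$$|f(x)-f(y_k)|\leqslant \alpha\cdot|x-x_0|^{\beta_n}\,.$$
Letting $k\to\infty$ and using $f(y_k)\to f(x_0)$ gives~(\ref{eq24B}).
\end{proof}
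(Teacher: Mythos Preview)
Your proof is correct and follows essentially the same approach as the paper: the paper argues by contradiction (if the limit did not exist, one would find sequences $x_m,y_m\to x_0$ with $|f(x_m)-f(y_m)|\geqslant\delta$, contradicting~(\ref{eq2.4.3})), while you phrase the same step positively via the Cauchy criterion, and both then obtain~(\ref{eq24B}) by passing to the limit $y\to x_0$ in~(\ref{eq2.4.3}). The only cosmetic remark is that the nonemptiness of $B(x_0,r)\cap D$ comes from $x_0\in\partial D$ rather than from condition~1), but this does not affect the argument.
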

\begin{proof}
If the mapping $f$ did not have a limit as $x\rightarrow x_0,$ then
we would construct at least two sequences $x_m\rightarrow x_0$ and
$y_m\rightarrow x_0,$ $m\rightarrow\infty,$ such that
$|f(x_m)-f(y_m)|\geqslant \delta>0$ for some positive $\delta>0$ and
all $m=1,2,\ldots .$ But this contradicts the
inequality~(\ref{eq2.4.3}). Therefore, the limit of $f$ as
$x\rightarrow x_0$ exists. To prove the inequality~(\ref{eq24B}), it
remains to pass in the relation~(\ref{eq2.4.3}) to the limit as
$y\rightarrow x_0.$~$\Box$
\end{proof}

\section{Proof of Theorem~\ref{th4}}

Let
$$\varepsilon_0=\min\{\varepsilon^{\,\prime}_0(x_0),
r_0^{\,\prime}, \delta, 1\}\,,$$
where $\varepsilon^{\,\prime}_0>0$ is a number from
Proposition~\ref{pr1} with $\varphi\equiv 1,$ besides that
$r_0^{\,\prime}$ is a number from conditions of the theorem, and
$\delta$ is a number from the definition of $\frak{R}^{A_0, R_0}_{Q,
\delta}(D, x_0).$ We set
$\widetilde{\varepsilon_0}:=\varepsilon^2_0.$ Let $x, y\in B(x_0,
\widetilde{\varepsilon_0})$ and let $f\in \frak{R}^{A_0, R_0}_{Q,
\delta}(D, x_0).$ Now, $x, y\in B(x_0, \varepsilon_0).$ Without loss
of a generalization, we may assume that $|x-x_0|\geqslant|y-x_0|.$

Let $K_{f}\subset D^{\,\prime}_{f}$ be a path connected continuum
such that ${\rm diam\,} (K_{f})\geqslant \delta$ and, in addition,
$h(f^{\,-1}(K_{f}),
\partial D)\geqslant \delta>0$ (such a continuum exists by the definition of
the class $\frak{R}^{A_0, R_0}_{Q, \delta}(D, x_0)$). By the
definition of $\varepsilon_0,$
\begin{equation}\label{eq18A}
f^{\,-1}(K_{f})\subset D\setminus B(x_0, \varepsilon_0)\,.
\end{equation}
Since the points $S(x_0, r)\cap D,$ $0<r<r^{\,\prime}_0,$ are
accessible from the domain $D$ by means of some path $\gamma,$ and
the set $B(x_0, r)\cap D$ is connected for all $0<r<r^{\,\prime}_0,$
the points $x$ and $y$ may be joined by a path $K,$ which belongs
entirely to the ball $\overline{B(x_0, |x-x_0|)}$ and belongs
to~$D.$ We may consider that $K$ is a Jordan path. Let $z, w\in
K_{f}\subset D^{\,\prime}_{f}$ be such that
\begin{equation}\label{eq8A}
{\rm diam}\,K_{f}=|z-w|\geqslant \delta\,.
\end{equation}
Since $K_{f}$ is path connected, we may join points $z, w$ by a
Jordan path~$K^{\,\prime}$ inside $K_{f}.$ Due to~(\ref{eq8A}) we
obtain that
\begin{equation}\label{eq9A}
{\rm diam}\,|K^{\,\prime}|\geqslant |z-w|\geqslant \delta\,.
\end{equation}
If the path $f(|K|))$ is not Jordan, we discard from $f(|K|))$ no
more than a finite number of its loops. Let $|D|\subset f(|K|))$ be
a locus of the Jordan path $D,$ which is obtained by such rejection.
Just as in the proof of Theorem~\ref{th3}, it may be shown that
$R=R(|D|, |K^{\,\prime}|)$ is a ring in ${\Bbb R}^n.$

In this case, we denote $\Gamma=\Gamma(|D|, |K^{\,\prime}|, {\Bbb
R}^n).$ Now, by Proposition~\ref{pr3}
\begin{equation}\label{eq10A}
M(\Gamma(|D|, |K^{\,\prime}|, {\Bbb
R}^n))\geqslant\frac{\omega_{n-1}}{\left(
\log\frac{2\lambda^2_n}{h(|K^{\,\prime}|)h(|D|)}\right)^{n-1}}\,.
\end{equation}
Due to~(\ref{eq9A}) and~(\ref{eq10A}), and by the definition of the
path~$K^{\,\prime},$ we obtain that
\begin{equation}\label{eq15A}
M(\Gamma(|D|, |K^{\,\prime}|, {\Bbb
R}^n))\geqslant\frac{\omega_{n-1}}{\left(
\log\frac{2\lambda^2_n}{\delta\cdot h(f(x), f(y))}\right)^{n-1}}\,.
\end{equation}
Since~$D^{\,\prime}_{f}$ is a $QED$-domain with a constant $A_0$
in~(\ref{eq4***}), by~(\ref{eq15A}) we obtain that
\begin{equation}\label{eq16A}
M(\Gamma(|D|, |K^{\,\prime}|,
D^{\,\prime}_{f}))\geqslant\frac{\omega_{n-1}}{A_0\left(
\log\frac{2\lambda^2_n}{\delta\cdot h(f(x), f(y))}\right)^{n-1}}\,.
\end{equation}
Let $\Gamma^{\,*}$ be a family $\gamma:[0, 1)\rightarrow D$ of
maximal $f$-liftings of paths $\gamma^{\,\prime}:[0, 1]\rightarrow
D^{\,\prime}_{f}$ from $\Gamma=\Gamma(|D|, |K^{\,\prime}|,
D^{\,\prime}_{f})$ starting at $|K|.$ Such liftings exist by
Proposition~\ref{pr2}. By the same Proposition, due to the closeness
of~$f,$ we obtain that each path $\gamma\in \Gamma^{\,*}$ has an
extension $\gamma:[0, 1]\rightarrow D$ to the point $b=1.$ Then
$\gamma(1)\in f^{\,-1}(K_{f}),$ that is, $\Gamma^{\,*}\subset
\Gamma(|K|, f^{\,-1}(K_{f}), D).$

Reasoning similar to the proof of Theorem~\ref{th3}, we may show
that
\begin{equation}\label{eq17A}
\Gamma(|K|, f^{\,-1}(K_{f}), D)>\Gamma(S(x_0, |x-x_0|), S(x_0,
\varepsilon_0), D)\,.
\end{equation}
Observe that, $f(\Gamma^{\,*})=\Gamma=\Gamma(|D|, |K^{\,\prime}|,
D^{\,\prime}_{f}).$ In this case, by the minorization of the modulus
(see., e.g., \cite[Theorem~6.4]{Va}), by~(\ref{eq17A})
and~(\ref{eq3*!!}) we obtain that
$$M(f(\Gamma^{\,*}))=M(\Gamma(|D|,
|K^{\,\prime}|, D^{\,\prime}_{f}))\leqslant $$
\begin{equation}\label{eq20A}
\leqslant M(f(\Gamma(S(x_0, |x-x_0|), S(x_0, \varepsilon_0),
D)))\leqslant\int\limits_{A} Q(x)\cdot \eta^n(|x-x_0|)\ dm(x)\,,
\end{equation}
where $\eta$ is an arbitrary nonnegative Lebesgue measurable
function satisfying the relation~(\ref{eq28*}) for $r_1=|x-x_0|,$
$r_2=\varepsilon_0.$ Put
$$\eta(t):=\begin{cases}\frac{1}
{\left(\log\frac{\varepsilon_0}{|x-x_0|}\right)
t}\,,& t\in(|x-x_0|, \varepsilon_0)\,, \\
0\,,& t\not\in(|x-x_0|, \varepsilon_0)\,.
\end{cases}$$
Observe that, the function $\eta$ satisfies the
condition~(\ref{eq28*}) for $r_1=|x-x_0|,$ $r_2=\varepsilon_0.$ Now,
by the relation~(\ref{eq20}) we obtain that
\begin{equation}\label{eq21A}
M(\Gamma(|D|, |K^{\,\prime}|, D^{\,\prime}_{f}))\leqslant
\frac{1}{\log^n\frac{\varepsilon_0}{|x-x_0|}}\int\limits_{A(x_0,
|x-x_0|, \varepsilon_0)} \frac{Q(x)}{{|x-x_0|}^n} \,dm(x)\,.
\end{equation}
Observe that, the inequality~(\ref{eq3}) holds by the choice of $x$
and $\varepsilon_0.$ Then, by~(\ref{eq21}) and by
Proposition~\ref{pr1}, we obtain that
\begin{equation}\label{eq22A}
M(\Gamma(|D|, |K^{\,\prime}|, D^{\,\prime}_{f}))\leqslant
\frac{2}{\log^{n-1}\frac{\varepsilon_0}{|x-x_0|}}\frac{C\Omega_n2^n}{\log
2}\,.
\end{equation}
Combining~(\ref{eq16A}) and~(\ref{eq22A}), we obtain that
\begin{equation}\label{eq23A}
\frac{\omega_{n-1}}{A_0\left(\log\frac{2\lambda^2_n}{\delta\cdot
h(f(x), f(y))}\right)^{n-1}}\leqslant
\frac{2}{\log^{n-1}\frac{\varepsilon_0}{|x-x_0|}}\frac{C\Omega_n2^n}{\log
2}\,.
\end{equation}
Since~$\omega_{n-1}=n\Omega_n,$ the latter relation may be rewritten
in the following form:
\begin{equation}\label{eq24A}
\frac{1}{\log^{n-1}\frac{2\lambda^2_n}{\delta\cdot h(f(x),
f(y))}}\leqslant
\frac{1}{\log^{n-1}\frac{\varepsilon_0}{|x-x_0|}}\frac{A_0C2^{n+1}}{n\log
2}\,.
\end{equation}
By~(\ref{eq24A}), we obtain that
$$
\log\frac{2\lambda^2_n}{\delta\cdot h(f(x), f(y))}\geqslant
\log\frac{\varepsilon_0}{|x-x_0|}\cdot \left(\frac{n\log
2}{A_0C2^{n+1}}\right)^{\frac{1}{n-1}}\,,
$$
$$
\frac{2\lambda^2_n}{\delta\cdot h(f(x), f(y))}\geqslant
\left(\frac{\varepsilon_0}{|x-x_0|}\right)^{ \left(\frac{n\log
2}{A_0C2^{n+1}}\right)^{\frac{1}{n-1}}}\,,
$$
\begin{equation}\label{eq25A}
h(f(x), f(y))\leqslant \frac{2\lambda^2_n}{\delta
{\varepsilon_0}^{\left(\frac{n\log
2}{A_0C2^{n+1}}\right)^{\frac{1}{n-1}}}}|x-x_0|^{ \left(\frac{n\log
2}{A_0C2^{n+1}}\right)^{\frac{1}{n-1}}}\,.\end{equation}
By the definition of the chordal metric in~(\ref{eq3C}), since
$f(D)=D^{\,\prime}_{f}\subset B(0, R_0)$ it follows that
$$h(f(x),f(y))\geqslant \frac{|f(x)-f(y)|}{1+R^2_0}\,.$$
Thus, by~(\ref{eq25A}) we obtain that
\begin{equation}\label{eq26A}
|f(x)-f(y)|\leqslant \frac{2(1+R^2_0)\lambda^2_n}{\delta
{\varepsilon_0}^{\left(\frac{n\log
2}{A_0C2^{n+1}}\right)^{\frac{1}{n-1}}}}|x-x_0|^{ \left(\frac{n\log
2}{A_0C2^{n+1}}\right)^{\frac{1}{n-1}}}\,,\end{equation}
or, equivalently,
\begin{equation}\label{eq27A}
|f(x)-f(y)|\leqslant \frac{2(1+R^2_0)\lambda^2_n}{\delta
{\varepsilon_0}^{\beta_n}}|x-x_0|^{\beta_n}\,,
\end{equation}
because $\beta_n=\left(\frac{n\log
2}{A_0C2^{n+1}}\right)^{\frac{1}{n-1}}.$ It finishes the
proof.~$\Box$

\medskip
\begin{corollary}\label{cor2}
{\sl\, Under conditions of Theorem~\ref{th4}, the mapping $f$ has a
continuous extension at~$x_0$ and
$$|f(x)-f(x_0)|\leqslant \alpha\cdot|x-x_0|^{\beta_n}\,.$$
}
\end{corollary}
{\it Proof} of Corollary~\ref{cor2} is completely similar to the
proof of Corollary~\ref{cor1}.~$\Box$

\section{Proof of Theorem~\ref{th1}}

Since the set $B(y_0, r)\cap D$ is finitely connected for all
$0<r<r^{\,\prime}_0,$ the domain $D$ is finitely connected on its
boundary. Therefore, the domain $D$ is uniform (see
\cite[Theorem~3.2]{Na$_1$}). In other words, for every $r>0$ there
exists a number $\delta>0$ such that the inequality
\begin{equation}\label{eq17***}
M(\Gamma(F^{\,*},F, D))\geqslant \delta
\end{equation}
holds for all continua $F, F^*\subset D$ such that $h(F)\geqslant r$
and $h(F^{\,*})\geqslant r.$

\medskip Let us to prove that, for any $P\in E_D$
there exists $y_0\in \partial D$ such that $I(P)=\{y_0\}.$ We prove
this statement from the opposite, namely, suppose that there is a
$P\in E_D,$ which contains two points $x, y\in \partial D,$ $x\ne
y.$ In this case, there are at least two sequences $x_m, y_m\in
d_m,$ $m=1,2,\ldots ,$ which converge to $x$ and $y$ as
$m\rightarrow\infty,$ respectively (here $d_m$ denotes the
decreasing sequence of domains formed by some sequence of cuts
$\sigma_m,$ $m=1,2,\ldots ,$ corresponding to the prime end of $P$).
Let us join the points $x_m$ and $d_m$ with the path $\gamma_m$ in
the domain $d_m.$ Since $x\ne y,$ there exists $m_0\in {\Bbb N}$
such that $h(\gamma_m)\geqslant d(x,y)/2,$ $m>m_0.$ Choose any
nondegenerate continuum $C\subset D\setminus d_1.$ Then, due to the
uniformity of the domain $D$
\begin{equation}\label{eq1}
M(\Gamma(|\gamma_m|, C, D))\geqslant\delta_0>0
\end{equation}
for some $\delta_0>0$ and all $m>m_0.$ The relation~(\ref{eq1})
contradicts with the definition of the prime end~$P.$ Indeed, by the
definition of a cut~$\sigma_m,$ we obtain that $\Gamma(|\gamma_m|,
C, D)>\Gamma(\sigma_m, C, D).$ Now, due to~(\ref{eqSIMPLE}), we have
that
$$M(\Gamma(|\gamma_m|, C, D))\leqslant M(\Gamma(\sigma_m, C, D))\rightarrow 0$$
as $m\rightarrow\infty.$ The latter relation contradicts
with~(\ref{eq1}). Thus, $I(P)=\{y_0\}$ for some $y_0\in
\partial D.$

\medskip
It remains to prove the relation~(\ref{eq3A}). Let
$$\varepsilon_0=\min\{\varepsilon^{\,\prime}_0(x_0),
r_0^{\,\prime}, {\rm dist}\,(x_0, A), 1\}\,,$$
where $\varepsilon_0>0$ is a number from Proposition~\ref{pr1} with
$\varphi\equiv 1,$ besides that $r_0^{\,\prime}$ is a number from
conditions of the theorem, and $A$ is a continuum from the
definition of $\frak{F}^{A_0, R_0}_{Q, A, \delta}(D, P_0).$ By the
proving above, there is $x_0\in
\partial D$ such that $I(P_0)=x_0.$ It follows that, there is
$\rho_0=\rho_0(x_0)>0$ such that $B_{\rho}(P_0, \rho_0)\subset
B(x_0, \widetilde{\varepsilon_0}),$
$\widetilde{\varepsilon_0}=\varepsilon^2_0.$ It follows from the
definition of a regular domain that, $B_{\rho}(P_0, \rho)$ is
connected for sufficiently small $\rho.$ Thus, we may consider that
$B_{\rho}(P_0, \rho_0)$ is connected. Let $x, y\in B_{\rho}(P_0,
\rho_0)$ and let $f\in\frak{F}^{A_0, R_0}_{Q, A, \delta}(D, P_0).$
We may consider that $|x-x_0|\geqslant |y-x_0|.$ By the definition
of $r_0^{\,\prime},$ the points $x$ and $y$ may be joined by the
path $K,$ which is contained in the ball $\overline{B(x_0,
|x-x_0|)}.$
The further course of the proof is very similar to the proof of
Theorem~\ref{th3}. We may assume that, the path $K$ is Jordan.  Let
also $z, w\in f(A)\subset D^{\,\prime}_{f}$ and $u, v\in A$ be such
that
\begin{equation}\label{eq8B}
{\rm diam}\,f(A)=|z-w|=|f(u)-f(v)|\geqslant \delta\,.
\end{equation}
Since~$A$ is path connected, it is possible to join the points $u,
v$ by the path $K^{\,\prime}$ inside $A.$ Due to~(\ref{eq8B}), we
will to have that
\begin{equation}\label{eq9B}
{\rm diam}\,f(|K^{\,\prime}|)\geqslant |f(u)-f(v)|\geqslant
\delta\,.
\end{equation}
We may assume that the  $K^{\,\prime}$ is also Jordan.

Note that, $f(K)$ and $f(K^{\,\prime})$ are also Jordan ones, and
they do not split the space ${\Bbb R}^n$ (see the proof of this fact
in the course of proving Theorem~\ref{th3}). Therefore, $R=R(f(|K|),
f(|K^{\,\prime}|))$ is a ring domain. Let us denote
$\Gamma=\Gamma(f(|K|), f(|K^{\,\prime}|), {\Bbb R}^n).$ Then, by
Proposition~\ref{pr3}
\begin{equation}\label{eq10B}
M(\Gamma(f(|K|), f(|K^{\,\prime}|), \overline{{\Bbb
R}^n}))\geqslant\frac{\omega_{n-1}}{\left(
\log\frac{2\lambda^2_n}{h(f(|K|)))h(|f(K^{\,\prime}|))}\right)^{n-1}}\,.
\end{equation}
By~(\ref{eq9B}) and~(\ref{eq10B}), by the definition of $K,$
\begin{equation}\label{eq15B}
M(\Gamma(f(|K|), f(|K^{\,\prime}|), \overline{{\Bbb
R}^n}))\geqslant\frac{\omega_{n-1}}{\left(
\log\frac{2\lambda^2_n}{\delta\cdot h(f(x), f(y))}\right)^{n-1}}\,.
\end{equation}
Since $D^{\,\prime}_{f}$ is a $QED$-domain with a constant $A_0$
in~(\ref{eq4***}), by the condition~(\ref{eq15B}), we obtain that
\begin{equation}\label{eq16B}
M(\Gamma(f(|K|), f(|K^{\,\prime}|),
D^{\,\prime}_{f}))\geqslant\frac{\omega_{n-1}}{A_0\left(
\log\frac{2\lambda^2_n}{\delta\cdot h(f(x), f(y))}\right)^{n-1}}\,.
\end{equation}
Observe that,
\begin{equation}\label{eq17B}
\Gamma(|K|, |K^{\,\prime}|, D)>\Gamma(S(x_0, |x-x_0|), S(x_0,
\varepsilon_0), D)\,.
\end{equation}
The relation~(\ref{eq17B}) may be proved in the same way as the
relation~(\ref{eq17}) under the proof of Theorem~\ref{th3}. In this
case, by the minorizing property of the modulus (see, e.g.,
\cite[Theorem~6.4]{Va}), by~(\ref{eq17}) and~(\ref{eq3*!!}), we have
that
$$M(\Gamma(f(|K|), f(|K^{\,\prime}|), D^{\,\prime}_{f}))=
M(f(\Gamma(|K|, |K^{\,\prime}|, D)))\leqslant $$
\begin{equation}\label{eq20B}
\leqslant M(f(\Gamma(S(x_0, |x-x_0|), S(x_0, \varepsilon_0),
D)))\leqslant\int\limits_{A} Q(x)\cdot \eta^n(|x-x_0|)\ dm(x)\,,
\end{equation}
where $\eta$ is any nonnegative Lebesgue measurable function
satisfying the relation~(\ref{eq28*}) for $r_1=|x-x_0|,$
$r_2=\varepsilon_0.$ Put
$$\eta(t):=\begin{cases}\frac{1}
{\left(\log\frac{\varepsilon_0}{|x-x_0|}\right)
t}\,,& t\in(|x-x_0|, \varepsilon_0)\,, \\
0\,,& t\not\in(|x-x_0|, \varepsilon_0)\,.
\end{cases}$$
Observe that, the function $\eta$ satisfies the
relation~(\ref{eq28*}) for $r_1=|x-x_0|,$ $r_2=\varepsilon_0.$ Then,
by the condition~(\ref{eq20B}) we obtain that
\begin{equation}\label{eq21B}
M(\Gamma(f(|K|), f(|K^{\,\prime}|), D^{\,\prime}_{f}))\leqslant
\frac{1}{\log^n\frac{\varepsilon_0}{|x-x_0|}}\int\limits_{A(x_0,
|x-x_0|, \varepsilon_0)} \frac{Q(x)}{{|x-x_0|}^n} \,dm(x)\,.
\end{equation}
Since $|x-x_0|<\varepsilon^2_0,$ the relation~(\ref{eq3}) holds.
Now, by~(\ref{eq21B}) and due to Proposition~\ref{pr1}, we have that
\begin{equation}\label{eq22B}
M(\Gamma(f(|K|), f(|K^{\,\prime}|), D^{\,\prime}_{f}))\leqslant
\frac{2}{\log^{n-1}\frac{\varepsilon_0}{|x-x_0|}}\frac{C\Omega_n2^n}{\log
2}\,.
\end{equation}
Combining~(\ref{eq16B}) and~(\ref{eq22B}), we conclude that
\begin{equation}\label{eq23B}
\frac{\omega_{n-1}}{A_0\left( \log\frac{2\lambda^2_n}{\delta\cdot
h(f(x), f(y))}\right)^{n-1}}\leqslant
\frac{2}{\log^{n-1}\frac{\varepsilon_0}{|x-x_0|}}\frac{C\Omega_n2^n}{\log
2}\,.
\end{equation}
Next, we reason verbatim in the same way as when proving the
Theorem~\ref{th3} after the relation~(\ref{eq23}). As a result we
come to inequality
\begin{equation}\label{eq27B}
|f(x)-f(y)|\leqslant \frac{2(1+R^2_0)\lambda^2_n}{\delta
{\varepsilon_0}^{\beta_n}}|x-x_0|^{\beta_n}\,,
\end{equation}
because $\beta_n=\left(\frac{n\log
2}{A_0C2^{n+1}}\right)^{\frac{1}{n-1}}.$ The relation~(\ref{eq27B})
completes the proof.~$\Box$

\medskip
\begin{corollary}\label{cor3}
{\sl\, Under conditions of Theorem~\ref{th1}, $f$ has a continuous
extension to $P_0\in E_D,$ wherein
\begin{equation}\label{eq3B} |f(x)-f(P_0)|\leqslant
\alpha\cdot|x-x_0|^{\beta_n}\,,\end{equation}
where $\beta_n=\left(\frac{n\log
2}{A_0C2^{n+1}}\right)^{\frac{1}{n-1}}.$
  }
\end{corollary}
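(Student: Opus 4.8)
The plan is to follow the proof of Corollary~\ref{cor1} almost word for word; the only additional ingredient is the passage from the prime-end metric $\rho$ on $\overline{D}_P$ to the Euclidean metric near the impression point $x_0,$ where $\{x_0\}=I(P_0)$ is the single point furnished by Theorem~\ref{th1}. Throughout, I use that Theorem~\ref{th1} already supplies $\rho_0>0$ and $\alpha>0$ for which the estimate~(\ref{eq3A}) holds for all $x,y\in B_{\rho}(P_0,\rho_0)\cap D,$ with $\{x_0\}=I(P_0).$

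First I would record the auxiliary fact that if $x_m\in D$ and $\rho(x_m,P_0)\to 0,$ then $|x_m-x_0|\to 0.$ Indeed, convergence to $P_0$ in $\overline{D}_P$ is equivalent, by the definition of the prime-end topology, to the property that for every $k$ all but finitely many $x_m$ lie in the nested domain $d_k$ of a chain of cuts defining $P_0;$ since $D$ is bounded, the sets $\overline{d_k}$ form a decreasing sequence of compacta with $\bigcap_k\overline{d_k}=I(P_0)=\{x_0\},$ whence ${\rm diam}\,\overline{d_k}\to 0,$ and therefore $x_m\to x_0$ in ${\Bbb R}^n.$

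Next, arguing exactly as in Corollary~\ref{cor1}: if $\lim_{x\to P_0}f(x)$ did not exist, one could choose sequences $x_m,y_m\in B_{\rho}(P_0,\rho_0)\cap D$ with $\rho(x_m,P_0)\to 0,$ $\rho(y_m,P_0)\to 0$ and $|f(x_m)-f(y_m)|\geqslant\delta^{\,\prime}>0$ for all $m.$ By the auxiliary fact, $|x_m-x_0|\to 0$ and $|y_m-x_0|\to 0,$ so~(\ref{eq3A}) forces $|f(x_m)-f(y_m)|\to 0,$ a contradiction. Hence $f(P_0):=\lim_{x\to P_0}f(x)$ exists; this is the desired continuous extension of $f$ to $P_0.$

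Finally, to obtain~(\ref{eq3B}) I would fix $x\in B_{\rho}(P_0,\rho_0)\cap D$ and pass to the limit as $y\to P_0$ in~(\ref{eq3A}); by the auxiliary fact $|y-x_0|^{\beta_n}\to 0$ and $f(y)\to f(P_0),$ so $|f(x)-f(P_0)|\leqslant\alpha\cdot|x-x_0|^{\beta_n},$ which is~(\ref{eq3B}). The only step that demands care is the auxiliary fact relating $\rho$-convergence to Euclidean convergence toward the impression point, which is part of the standard prime-end machinery already recalled above; everything else is a verbatim repetition of the proof of Corollary~\ref{cor1}, so no genuine obstacle is expected.
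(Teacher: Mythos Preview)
Your proposal is correct and follows essentially the same argument as the paper's own proof: the paper also observes that $I(P_0)=\{x_0\}$ implies $x\to P_0\Rightarrow x\to x_0,$ then derives existence of the limit by contradiction with~(\ref{eq3A}) and finally passes to the limit as $y\to P_0$ in~(\ref{eq3A}). Your version is in fact slightly more explicit about why $\rho$-convergence forces Euclidean convergence to $x_0$ (via the shrinking diameters of the $\overline{d_k}$), a point the paper simply asserts.
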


\medskip
\begin{proof}
By Theorem~\ref{th1}, $I(P)=x_0.$ From this it follows that, if
$x\rightarrow P_0$ and $y\rightarrow P_0,$ then $x\rightarrow x_0$
and $y\rightarrow x_0,$ as well. If the mapping $f$ did not have a
limit as $x\rightarrow P_0,$ then we may construct at least two
sequences $x_m \rightarrow P_0$ and $y_m\rightarrow P_0,$
$m\rightarrow\infty,$ such that $|f(x_m)-f(y_m)|\geqslant \delta>0$
for some positive $\delta>0$ and all $m=1,2,\ldots .$ But this
contradicts the inequality~(\ref{eq3A}). Therefore, the limit of $f$
as $x\rightarrow P_0$ exists. To prove~(\ref{eq24B}) it remains to
go in~(\ref{eq2.4.3}) to the limit as $y\rightarrow P_0.$~$\Box$
\end{proof}

\section{Proof of Theorem~\ref{th2}}

Due to the fact that the proof of this theorem is very similar to
all the previous ones, we will limit ourselves only to the scheme of
the proof. The fact that, for each $P\in E_D$ there exists $y_0\in
\partial D$ such that $I(P)=\{y_0\},$ may be established in the same
way as under proofs of Theorem~\ref{th1}. Let us to establish the
relation~(\ref{eq4A}). Let $\{x_0\}=I(P_0)$ and let
$$\varepsilon_0=\min\{\varepsilon^{\,\prime}_0(x_0),
r_0^{\,\prime}, {\rm dist}\,(x_0, A), 1\}\,,$$
where $\varepsilon_0>0$ is a number from Proposition~\ref{pr1} with
$\varphi\equiv 1,$ besides that, $r_0^{\,\prime}$ is a number from
conditions of the theorem, and $A$ is a continuum from the
definition of $\frak{F}^{A_0, R_0}_{Q, A, \delta}(D, P_0).$ It
follows that, there is $\rho_0=\rho_0(x_0)>0$ such that
$B_{\rho}(P_0, \rho_0)\subset B(x_0, \widetilde{\varepsilon_0}),$
$\widetilde{\varepsilon_0}=\varepsilon^2_0.$ It follows from the
definition of a regular domain that, $B_{\rho}(P_0, \rho)$ is
connected for sufficiently small $\rho.$ Let $x, y\in B_{\rho}(P_0,
\rho_0)$ and let $f\in\frak{R}^{A_0, R_0}_{Q, \delta}(D, P_0).$ We
may consider that $|x-x_0|\geqslant |y-x_0|.$ By the definition of
$r_0^{\,\prime},$ the points $x$ and $y$ may be joined by the path
$K,$ which is contained in the ball $\overline{B(x_0, |x-x_0|)}.$

Let $K_{f}\subset D^{\,\prime}_{f}$ be a path connected continuum
such that ${\rm diam\,} (K_{f})\geqslant \delta$ and such that
$h(f^{\,-1}(K_{f}),
\partial D)\geqslant \delta>0$ (such a continuum exists
by the definition of the class $\frak{R}^{A_0, R_0}_{Q, \delta}(D,
P_0)$).
Let also $z, w\in K_{f}\subset D^{\,\prime}_{f}$ be such that
\begin{equation}\label{eq8C}
{\rm diam}\,K_{f}=|z-w|\geqslant \delta\,.
\end{equation}
Since $K_{f}$ is path connected, we may join the points $z, w$ by a
Jordan path $K^{\,\prime}$ inside $K_{f}.$ Due to~(\ref{eq8C}), we
obtain that
%
$${\rm diam}\,f(|K^{\,\prime}|)\geqslant
|f(u)-f(v)|\geqslant \delta\,.$$
%
Next, we reason similarly to the proof of Theorem~\ref{th2} after
the relation~(\ref{eq9A}). We obtain that
\begin{equation}\label{eq27C}
|f(x)-f(y)|\leqslant \frac{2(1+R^2_0)\lambda^2_n}{\delta
{\varepsilon_0}^{\beta_n}}|x-x_0|^{\beta_n}\,,
\end{equation}
because $\beta_n=\left(\frac{n\log
2}{A_0C2^{n+1}}\right)^{\frac{1}{n-1}}.$ The relation~(\ref{eq27C})
contradicts completes the proof.~$\Box$

\medskip
\begin{corollary}\label{cor4}
{\sl\, Under conditions of Theorem~\ref{th2}, the mapping~$f$ has a
continuous extension to $P_0\in E_D,$ wherein
\begin{equation}\label{eq3BA} |f(x)-f(P_0)|\leqslant
\alpha\cdot|x-x_0|^{\beta_n}\,,\end{equation}
where $\beta_n=\left(\frac{n\log
2}{A_0C2^{n+1}}\right)^{\frac{1}{n-1}}.$
  }
\end{corollary}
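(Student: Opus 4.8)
The plan is to prove Theorem~\ref{th2} by following the same architecture used in Theorems~\ref{th3}, \ref{th4} and \ref{th1}, adapting the branching case (Theorem~\ref{th4}) to the prime-end setting (Theorem~\ref{th1}). First I would settle the topological part: since $B(y_0,r)\cap D$ is finitely connected for all small $r>0$, the domain $D$ is finitely connected on its boundary and hence uniform by \cite[Theorem~3.2]{Na$_1$}, so the estimate~(\ref{eq17***}) holds. Exactly as in the proof of Theorem~\ref{th1}, this uniformity together with~(\ref{eqSIMPLE}) in the definition of a prime end forces $I(P)=\{y_0\}$ to be a singleton for every $P\in E_D$; the argument is verbatim (join $x_m$ to points of $d_m$ by paths $\gamma_m$, use the chordal diameter lower bound $h(\gamma_m)\ge d(x,y)/2$, apply uniformity to get a contradiction with the vanishing of $M(\Gamma(\sigma_m,C,D))$).

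Next, with $\{x_0\}=I(P_0)$ fixed, I would set $\varepsilon_0=\min\{\varepsilon_0^{\,\prime}(x_0),r_0^{\,\prime},\delta,1\}$ where $\varepsilon_0^{\,\prime}$ comes from Proposition~\ref{pr1} with $\varphi\equiv 1$, and put $\widetilde{\varepsilon_0}=\varepsilon_0^2$. Using that $D$ is regular, $B_\rho(P_0,\rho)$ is connected for small $\rho$, so I choose $\rho_0>0$ with $B_\rho(P_0,\rho_0)\subset B(x_0,\widetilde{\varepsilon_0})$ and $B_\rho(P_0,\rho_0)$ connected. For $x,y\in B_\rho(P_0,\rho_0)$ with $|x-x_0|\ge|y-x_0|$, condition~1) of the theorem lets me join $x,y$ by a (Jordan) path $K$ lying in $\overline{B(x_0,|x-x_0|)}\cap D$. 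On the image side I take the path-connected continuum $K_f\subset D_f^{\,\prime}$ with $\operatorname{diam}K_f\ge\delta$ and $h(f^{-1}(K_f),\partial D)\ge\delta$ guaranteed by membership in $\frak{R}^{A_0,R_0}_{Q,\delta}(D,P_0)$, join two points realizing the diameter by a Jordan subpath $K^{\,\prime}$ inside $K_f$, so $\operatorname{diam}|K^{\,\prime}|\ge\delta$; here $h(f^{-1}(K_f),\partial D)\ge\delta$ plus the choice of $\varepsilon_0$ gives $f^{-1}(K_f)\subset D\setminus B(x_0,\varepsilon_0)$. As in Theorem~\ref{th4}, if $f(|K|)$ is not Jordan I discard finitely many loops to get a Jordan locus $|E|\subset f(|K|)$ (the paper's ``$D$'', but I will rename to avoid clashing with the domain $D$), and $R(|E|,|K^{\,\prime}|)$ is a ring not separating ${\Bbb R}^n$ by the dimension argument from Theorem~\ref{th3}. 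Proposition~\ref{pr3} then yields the lower bound $M(\Gamma(|E|,|K^{\,\prime}|,\overline{{\Bbb R}^n}))\ge\omega_{n-1}\big/\big(\log\frac{2\lambda_n^2}{\delta\,h(f(x),f(y))}\big)^{n-1}$, and the $QED$-property~(\ref{eq4***}) of $D_f^{\,\prime}$ divides this by $A_0$.

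For the upper bound I would introduce the family $\Gamma^{*}$ of maximal $f$-liftings starting at $|K|$ of paths in $\Gamma(|E|,|K^{\,\prime}|,D_f^{\,\prime})$; Proposition~\ref{pr2} together with closedness of $f$ shows each lifting extends to $b=1$ with endpoint in $f^{-1}(K_f)$, so $\Gamma^{*}\subset\Gamma(|K|,f^{-1}(K_f),D)$. The inclusion $|K|\subset\overline{B(x_0,|x-x_0|)}$ and $f^{-1}(K_f)\subset D\setminus B(x_0,\varepsilon_0)$ give, via \cite[Theorem~1.I.5.46]{Ku} exactly as in~(\ref{eq17}), the minorization $\Gamma(|K|,f^{-1}(K_f),D)>\Gamma(S(x_0,|x-x_0|),S(x_0,\varepsilon_0),D)$. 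Then the ring $Q$-condition~(\ref{eq3*!!})--(\ref{eq28*}) with the standard test function $\eta(t)=\big(t\log\frac{\varepsilon_0}{|x-x_0|}\big)^{-1}$ on $(|x-x_0|,\varepsilon_0)$, combined with $|x-x_0|<\varepsilon_0^2$ (so $\log\frac1{|x-x_0|}<2\log\frac{\varepsilon_0}{|x-x_0|}$) and Proposition~\ref{pr1}, bounds $M(\Gamma(|E|,|K^{\,\prime}|,D_f^{\,\prime}))$ above by $\frac{2}{\log^{n-1}(\varepsilon_0/|x-x_0|)}\cdot\frac{C\Omega_n2^n}{\log 2}$. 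Equating with the lower bound, using $\omega_{n-1}=n\Omega_n$, solving for $h(f(x),f(y))$, and finally passing from the chordal to the Euclidean metric via $f(D)\subset B(0,R_0)$ (so $h(f(x),f(y))\ge|f(x)-f(y)|/(1+R_0^2)$) yields $|f(x)-f(y)|\le\frac{2(1+R_0^2)\lambda_n^2}{\delta\,\varepsilon_0^{\beta_n}}|x-x_0|^{\beta_n}$ with $\beta_n=\big(\frac{n\log 2}{A_0C2^{n+1}}\big)^{1/(n-1)}$, and setting $\rho_0$ appropriately (or the stated $\varepsilon_0$) and $\alpha=2(1+R_0^2)\lambda_n^2\delta^{-1}\varepsilon_0^{-\beta_n}$ finishes the proof. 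The main obstacle — and the only place requiring real care beyond bookkeeping — is the lifting/closedness step: one must verify that \emph{every} lifting in $\Gamma^{*}$ reaches the endpoint in $f^{-1}(K_f)$ rather than escaping to $\partial D$, which is where Proposition~\ref{pr2} and the hypothesis that $f$ is closed are used, and one must check the liftings stay in $D$ so that the minorization with the spherical shells is legitimate; the rest is identical to Theorems~\ref{th4} and~\ref{th1}. $\Box$
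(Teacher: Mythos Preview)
You have written a proof of Theorem~\ref{th2}, not of Corollary~\ref{cor4}. The statement you were asked to prove takes the conclusion~(\ref{eq4A}) of Theorem~\ref{th2} as already established and asks for two further things: that $f$ extends continuously to the prime end $P_0$, and that the limiting inequality~(\ref{eq3BA}) holds. Your proposal establishes~(\ref{eq4A}) from scratch (correctly, and along the paper's lines), but never addresses the extension or the passage to the limit.

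The paper's proof of Corollary~\ref{cor4} is a two-line argument, declared ``completely similar'' to Corollary~\ref{cor3}: since $I(P_0)=\{x_0\}$, convergence $x\to P_0$ in $\overline{D}_P$ forces $x\to x_0$ in ${\Bbb R}^n$; if $f$ failed to have a limit along $P_0$ one could find sequences $x_m,y_m\to P_0$ with $|f(x_m)-f(y_m)|\geqslant\delta_1>0$, contradicting~(\ref{eq4A}) because the right side tends to zero. Hence the extension $f(P_0)$ exists, and letting $y\to P_0$ in~(\ref{eq4A}) yields~(\ref{eq3BA}). That is the missing content; your detailed reconstruction of Theorem~\ref{th2} is fine but not what was asked.
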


\medskip
{\it Proof } is completely similar to proof of
Corollary~\ref{cor3}.~$\Box$

\medskip
\medskip
{\bf \noindent Evgeny Sevost'yanov} \\
{\bf 1.} Zhytomyr Ivan Franko State University,  \\
40 Bol'shaya Berdichevskaya Str., 10 008  Zhytomyr, UKRAINE \\
{\bf 2.} Institute of Applied Mathematics and Mechanics\\
of NAS of Ukraine, \\
1 Dobrovol'skogo Str., 84 100 Slavyansk,  UKRAINE\\
esevostyanov2009@gmail.com

\end{document}